\theoremstyle{plain}
\newtheorem{theorem}{Theorem}[section]
\newtheorem{lemma}[theorem]{Lemma}
\newtheorem{definition}[theorem]{Definition}
\newtheorem*{claim}{Claim}
\theoremstyle{definition}
\newtheorem{remark}{Remark}
\newtheorem{question}{Question}
\def\1{\mathbf 1}
\def\Mod{\mathrm{Mod}}
\def\Aut{\mathrm{Aut}}
\def\C{\mathcal C}
\def\XX{\mathfrak X}
\def\CC{\mathfrak C}
\def\OO{\mathfrak O}
\def\UU{\mathfrak U}
\def\BB{\mathfrak B}
\def\SS{\mathfrak S}
\title{Finite rigid sets in curve complexes}
\author{Javier Aramayona and Christopher J. Leininger}
\begin{document}

\begin{abstract}
We prove that curve complexes of surfaces are {\em finitely rigid}:  for every orientable surface $S$ of finite topological type, we identify a finite subcomplex $\XX$ of the curve complex $\mathcal{C}(S)$ such that every locally injective simplicial map $\XX\to \mathcal{C}(S)$
is the restriction of an element of $\Aut(\C(S))$, unique up to the (finite) point-wise stabilizer of $\XX$ in $\Aut(\C(S))$.  Furthermore, if $S$ is not a twice-punctured torus, then we can replace $\Aut(\C(S))$ in this statement with the extended mapping class group $\Mod^\pm(S)$.
\end{abstract}

\maketitle

\section{Introduction}

The curve complex $\C(S)$ of a surface $S$ is a simplicial complex whose $k$-simplices correspond to sets of $k+1$ distinct isotopy classes of essential simple closed curves  on $S$ with pairwise disjoint representatives. A celebrated theorem of Ivanov \cite{Ivanov}, Korkmaz \cite{Korkmaz} and Luo \cite{Luo} asserts that curve complexes are {\em simplicially rigid}: the group $\Aut(\C(S))$ of  simplicial automorphisms of $\C(S)$ is, except in a few well-understood cases, isomorphic to the {\em extended mapping class group} $\Mod^{\pm}(S)$. 
This result has subsequently been extended by Irmak \cite{Irmak},  Behrstock-Margalit \cite{Behrstock-Margalit}, and Shackleton \cite{Shackleton} to more general types of simplicial self-maps of $\C(S)$, such as {\em superinjective} and {\em locally injective} maps (recall that a simplicial map is locally injective if the restriction to the star of every vertex is injective).

In this article, we prove that curve complexes are {\em finitely rigid}, answering a question of Lars Louder \cite{Louder}. More concretely, we will show:

\begin{theorem} \label{T:main}
Let $S$ be an orientable surface of finite topological type. Then there exists a finite simplicial complex $\XX \subset \C(S)$ such that for any locally injective simplicial map
\[ \phi:\XX \to \C(S) \]
there exists an element $f \in \Aut(\C(S))$ with $\phi = f|_{\XX}$.  Moreover, $f$ is unique up to the (finite) point-wise stabilizer $H_{\XX} < \Aut(\C(S))$ of $\XX$.
\end{theorem}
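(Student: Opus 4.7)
The plan is to construct $\XX$ explicitly as the simplicial span of a carefully chosen finite collection of essential simple closed curves on $S$, designed so that the combinatorial constraints imposed by local injectivity are already enough to recover a surface homeomorphism. First I would dispose of the sporadic low-complexity cases (few-holed spheres, once- and twice-punctured tori, closed genus two) by hand, because there the curve complex has exceptional structure and the conclusion of Ivanov--Korkmaz--Luo takes a modified form. For the remaining surfaces $S = S_{g,n}$, I would build $\XX$ around a ``backbone'' consisting of a chain of nonseparating curves cutting $S$ into simply structured pieces, augmented by further curves whose edges in $\XX$ pin down the topological type of each vertex (separating vs.\ nonseparating, and which complementary subsurface it bounds) via the pattern of disjointness relations alone.

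Assuming such $\XX$ has been built, the core step is showing that any locally injective simplicial $\phi \co \XX \to \C(S)$ preserves the topological type of every vertex of $\XX$ together with the induced combinatorics of the decomposition of $S$. Following the strategy used by Irmak, Behrstock--Margalit, and Shackleton, I would argue that suitable links and joins inside $\XX$ detect the topology of complementary subsurfaces, and that local injectivity prevents $\phi$ from collapsing the relevant structure. Once topological type is preserved across the vertices of $\XX$, the change-of-coordinates principle produces a mapping class $f$ with $f|_{\XX^{(0)}} = \phi|_{\XX^{(0)}}$; because $\phi$ is simplicial, $f$ automatically restricts to $\phi$ on all of $\XX$. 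Uniqueness is then immediate: if $f, f' \in \Aut(\C(S))$ both extend $\phi$, then $f^{-1}f'$ lies in $H_{\XX}$ by definition, and $H_{\XX}$ is finite because the chosen vertices of $\XX$ fill $S$, so their common stabilizer in $\Mod^\pm(S)$ (and hence in $\Aut(\C(S))$) is finite.

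I expect the main obstacle to lie in making $\XX$ simultaneously finite and combinatorially rich enough to force preservation of topological type under the \emph{weaker} hypothesis of local injectivity rather than superinjectivity. With only finitely many available curves one cannot appeal directly to Shackleton's preservation of intersection patterns, so additional ``witness'' curves must be inserted to ensure that, e.g., separating versus nonseparating is read off from edge patterns. A natural way to organize the construction is induction on the complexity $3g - 3 + n$: the backbone curves in $\XX$ split $S$ into subsurfaces of strictly smaller complexity, each carrying its own rigid set, and the restrictions of $\phi$ extend on each piece by induction; the remaining work is to glue these extensions compatibly along the shared cutting curves, which requires $\XX$ to contain enough ``compatibility curves'' meeting several subsurfaces at once to force the piecewise extensions to agree and assemble into a single global $f$.
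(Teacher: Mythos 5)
Your high-level plan captures several genuine features of the paper's argument: the explicit finite construction of $\XX$, the disposal of low-complexity cases by hand, the use of "witness" curves to compensate for the weakness of local injectivity relative to superinjectivity, the construction of a candidate homeomorphism via the change-of-coordinates principle once enough intersection data is forced, and the finiteness of $H_\XX$ because the vertices of $\XX$ fill $S$. Those parts are all correct and in the spirit of the paper.

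However, the organizing mechanism you propose — induction on complexity, building $\XX$ from rigid sets on complementary subsurfaces and gluing the piecewise extensions of $\phi$ — is not what the paper does, and as stated it has a gap. To invoke an inductive hypothesis on a subsurface $N\subset S$, you would need $\phi$ to carry a rigid set in $\C(N)$ (viewed inside $\C(S)$) into the curve complex of \emph{some} subsurface $N'$ of $S$, so that the inductive hypothesis produces a homeomorphism $N\to N'$. But $\phi$ is only a simplicial map into $\C(S)$; there is no a priori reason its image of curves lying in $N$ should again lie in a common subsurface, nor that the boundary curves of $N$ map to curves bounding a homeomorphic complement. Establishing precisely this is the hard part, and it is exactly what the paper's notion of $\XX$-detectable intersection (Definition \ref{D:detectable}, Lemma \ref{L:detectable2detectable}) and the Farey/nearly-Farey dichotomy (Lemma \ref{L:minimallyfilling}) are engineered to do: by exhibiting two pants decompositions inside $\XX$ differing in one curve, one forces $\phi(\alpha),\phi(\beta)$ to fill a $\xi=1$ subsurface, and then auxiliary curves rule out the four-holed-sphere alternative so that $i(\phi(\alpha),\phi(\beta))=1$. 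The paper then builds a candidate homeomorphism $h$ from the chain curves alone and, rather than gluing inductive extensions, verifies $h|_\XX=\phi$ curve-by-curve using almost-filling subsets that uniquely determine each remaining curve of $\XX$. Your "links and joins detect topology" heuristic points in the right direction but is not sharp enough to substitute for the detectability machinery; and your inductive gluing, while an attractive alternative scheme, would need to be preceded by exactly the kind of subsurface-preservation argument the paper supplies, at which point the induction no longer simplifies much.
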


We will call the set $\XX$ in Theorem \ref{T:main} a {\em finite rigid set} in $\C(S)$ (throughout, we will confuse subcomplexes $\XX \subset \C(S)$ and their vertex sets).
The finite rigid sets we will construct in the proof of Theorem \ref{T:main} have diameter at most 2 in $\C(S)$. Therefore, a natural question is:

\begin{question}
Are there finite rigid sets in $\C(S)$ of arbitrarily large diameter?
\end{question}

As we explain in Section \ref{S:spheresofcurves}, if $S$ is a sphere with punctures, the finite rigid set $\XX$ that we construct is a simplicial sphere of dimension equal to the homological dimension of $\C(S)$. In fact, if $S$ is a sphere with six punctures, then $\XX$ is the sphere considered by Broaddus in \cite{Broaddus} where he showed that it generates the top-dimensional homology group of $\C(S)$. We thus ask the following question; compare with  Conjecture 4.9 of \cite{Broaddus}:

\begin{question}
Suppose $S$ is a sphere with punctures. Are the finite rigid sets in $\C(S)$ constructed in Section \ref{S:spheres} homologically non-trivial?
\label{Q:homology}
\end{question}

The proof of Theorem \ref{T:main} is entirely constructive, although our methods become increasingly complicated as the genus of $S$ grows. For this reason, we will first establish Theorem \ref{T:main} for $S$ of genus 0 (Theorem \ref{T:sphere}). In Theorem \ref{T:tori} we will prove our main result in the case of $S$ a torus. We will then treat the case of $S$ of genus at least 2, dealing first with closed surfaces (Theorem \ref{T:closedsurfaces}) and then with the general case (Theorem \ref{T:punctured}).   We note that Theorems \ref{T:sphere}, \ref{T:tori}, \ref{T:closedsurfaces} and \ref{T:punctured} contain more precise statements involving $\Mod(S)$ or $\Mod^\pm(S)$.  Theorem \ref{T:main} then follows from an application of the results in \cite{Ivanov,Korkmaz,Luo}, which state that the homomorphism $\Mod^\pm(S) \to \Aut(\C(S))$ is surjective, with finite kernel, except when $S$ is a twice punctured torus.

The techniques we use are similar to those used in \cite{Ivanov,Korkmaz,Luo,Behrstock-Margalit,Irmak,Shackleton}.              
A common strategy is to use only intersection/nonintersection properties about
some set of curves to deduce more precise geometric information about a particular pair of
curves.  For example, one can deduce that a pair of curves intersects once.     
Given an automorphism of the curve complex, one uses this information to
construct a candidate homeomorphism of the surface.  It then   
remains to verify that the automorphism induced by the candidate
homeomorphism agrees with the given automorphism on every curve.

We use this same strategy, but must carefully choose the finite set of    
curves to find the right balance: not enough curves, and we cannot
determine the candidate homeomorphism; too many curves, and we cannot keep
enough control to show that the candidate induces the given automorphism
on every curve in our set.

\medskip

\noindent {\bf Non-examples.}  One might guess that a set of curves like the one shown on the left of Figure \ref{F:nonexample} would provide a finite rigid set $\XX \subset \C(S)$.  However, sending the curves on the left to the ones on the right determines a simplicial injection of $\XX$ into $\C(S)$ which cannot come from the restriction of an automorphism of $\C(S)$.  We also note that on a closed surface of genus $g \geq 2$, there is no finite rigid set $\XX$ with less than $3g-2$ curves, since any such set can be mapped to some subset of the curves in {\em any} pants decomposition.

\begin{figure}[htb]
\begin{center}
\includegraphics[height=1in]{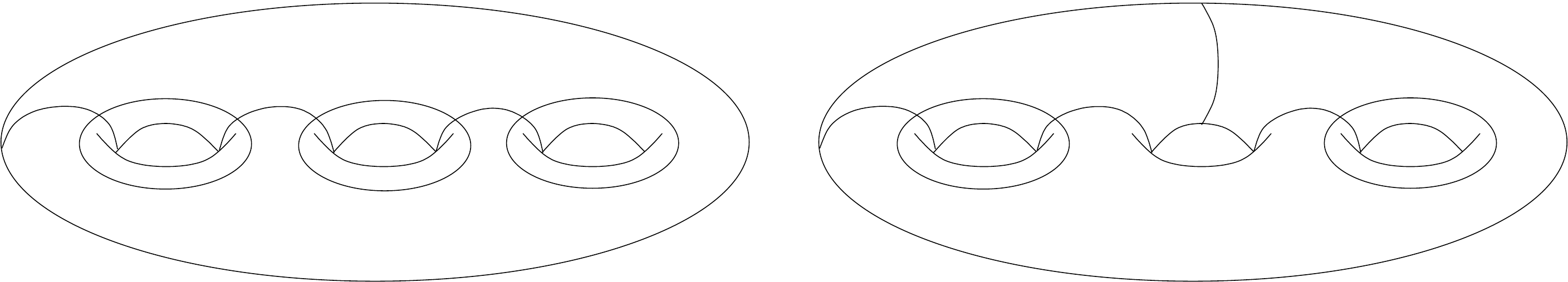} \caption{A finite non-rigid set on the left, and its image under a simplicial embedding on the right.} \label{F:nonexample}
\end{center}
\end{figure}

\noindent{\bf Acknowledgements.} The authors thank Lars Louder for asking the question that motivated this paper, and Dan Margalit for helpful conversations.

\section{Definitions}
\label{S:defs}

Let $S=S_{g,n}$ be an orientable surface of genus $g$ with $n$ punctures, and define the {\em complexity} of $S$ as $\xi(S) =3g-3+n$. We say that a simple closed curve on $S$ is {\em essential} if it does not bound a disk or a once-punctured disk on $S$.  An {\em essential subsurface} of $S$ is a properly embedded subsurface $N \subset S$ for which each boundary component is an essential curve in $S$.  By a {\em hole} in a (sub)surface we will mean either a boundary component or puncture (and when the distinction is irrelevant, we will not distinguish between a puncture and a boundary component).

The {\em curve complex} $\C(S)$ of $S$ is a simplicial complex whose $k$-simplices correspond to sets of $k+1$ isotopy classes of essential simple closed curves on $S$ with pairwise disjoint representatives.   In order to simplify the notation, a set of isotopy classes of simple closed curves will be confused with its representative curves, the corresponding vertices of $\C(S)$, and the subcomplex of $\C(S)$ spanned by the vertices.  We also assume that representatives of isotopy classes of curves and subsurfaces intersect essentially (that is, transversely and in the minimal number of components).

If $\xi(S) > 1$, then $\C(S)$ is a connected complex of dimension $\xi(S)-1$. If $\xi(S)\le 0$ and $S\ne S_{1,0}$, then $\C(S)$ is empty. If $\xi(S)=1$ or $S=S_{1,0}$, then $\C(S)$ is a countable set of vertices; in order to obtain a connected complex, we modify the definition of $\C(S)$ by declaring $\alpha, \beta\in \C^{(0)}(S)$ to be adjacent in $\C(S)$ whenever $i(\alpha,\beta)=1$ if $S=S_{1,1}$ or $S=S_{1,0}$, and whenever $i(\alpha,\beta)=2$ if $S=S_{0,4}$.  Furthermore, we add triangles to make $\C(S)$ into a flag complex.   In all three cases, the complex $\C(S)$ so obtained is isomorphic to the well-known {\em Farey complex}.

Recall that a {\em pants decomposition} $P$ is a simplex in $\C(S)$ of dimension $\xi(S) - 1$;  equivalently, $P$ is a set of pairwise disjoint curves whose complement in $S$ is a disjoint union of three-holed spheres.

\begin{definition}
Let $S$ be a surface, and $A$ a set of simple closed curves on $S$. We say that $A$ is {\em filling (in $S$)} if for any $\beta \in \C(S)$, $i(\alpha,\beta) \neq 0$ for some $\alpha \in A$.  Say that $A$ is {\em almost-filling (in $S$)} if there are only finitely many curves $\beta \in \C(S)$ such that $i(\alpha, \beta)=0$ for all $\alpha \in A$. If $A$ is almost-filling, then the set 
\[ B = \{ \beta \in \C^{(0)}(S) \setminus A \mid i(\beta,\alpha) = 0, \, \forall \alpha \in A  \} \] 
is called the set of curves {\em determined by $A$.} In the special case when $B=\{\beta\}$, we say that the simple closed curve $\beta$ is {\em uniquely determined} by $A$. 
\end{definition}

An example of an almost filling set to keep in mind is the following.  Suppose $N \subset S$ is an essential subsurface, $A$ is set of curves which is filling in $N$, and $P$ is a pants decomposition of $S - N$ (not including $\partial N$).  Then $A \cup P$ is an almost filling set, and $\partial N$ is the set of curves determined by $A \cup P$.

\begin{definition}  Let $S$ be a surface and $\XX \subset \C(S)$ a subcomplex.  If $\alpha,\beta \in \XX$ are curves with $i(\alpha,\beta) \neq 0$, then we say that their intersection is {\em $\XX$--detectable} (or simply {\em detectable} if $\XX$ is understood) if there are two pants decompositions $P_\alpha,P_\beta \subset \XX$ such that
\begin{equation} \label{E:detectable} \alpha \in P_\alpha, \, \beta \in P_\beta, \mbox{ and } P_\alpha - \alpha = P_\beta - \beta. \end{equation}
\label{D:detectable}
\end{definition}
We note that if $\alpha,\beta$ have detectable intersection, then they must fill a $\xi=1$ (essential) subsurface, which we denote $ N(\alpha\cup\beta) \subset S$.   For notational purposes, we call $P = P_\alpha - \alpha = P_\beta - \beta$ a {\em pants decomposition of $S - N(\alpha\cup\beta)$}, even though it includes the boundary components of $N(\alpha\cup\beta)$.
\begin{lemma} \label{L:detectable2detectable}
Let $\XX \subset \C(S)$ be a subcomplex, and $\alpha,\beta \in \XX$ intersecting curves with $\XX$--detectable intersection.  If $\phi:\XX \to \C(S)$ is a locally injective simplicial map, then $\phi(\alpha),\phi(\beta)$ have $\phi(\XX)$--detectable intersection, and hence fill a $\xi=1$ subsurface.
\end{lemma}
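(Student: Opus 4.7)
The plan is to use $\phi(P_\alpha)$ and $\phi(P_\beta)$ as the witness pants decompositions showing that $\phi(\alpha),\phi(\beta)$ have $\phi(\XX)$--detectable intersection. Write $P = P_\alpha - \alpha = P_\beta - \beta$. First, since every curve of $P_\alpha$ either equals $\alpha$ or is disjoint from $\alpha$, the whole simplex $P_\alpha$ lies in the star of $\alpha$; local injectivity of $\phi$ there forces $\phi|_{P_\alpha}$ to be injective, so $\phi(P_\alpha)$ is a simplex of dimension $\xi(S)-1$ in $\C(S)$, i.e., a pants decomposition. The same reasoning works for $P_\beta$, and injectivity of $\phi$ on both $P_\alpha$ and $P_\beta$ yields $\phi(P_\alpha) - \phi(\alpha) = \phi(P) = \phi(P_\beta) - \phi(\beta)$.

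The main obstacle is showing $\phi(\alpha) \ne \phi(\beta)$: because $\alpha$ and $\beta$ intersect, neither lies in the star of the other, so local injectivity at $\alpha$ or $\beta$ gives no direct constraint. The key trick is to use a third curve. Pick any $\gamma \in P$, which exists whenever $\xi(S) \geq 2$; then $\alpha$ and $\beta$ are both disjoint from $\gamma$, hence both lie in the star of $\gamma$, and local injectivity of $\phi$ on this star delivers $\phi(\alpha) \ne \phi(\beta)$. In the degenerate case $\xi(S)=1$ (the Farey complex), where $P=\emptyset$, the intersecting curves $\alpha,\beta$ are already adjacent in $\C(S)$ by the modified definition, so $\phi(\alpha) \ne \phi(\beta)$ follows directly from local injectivity at $\alpha$.

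Once $\phi(\alpha) \ne \phi(\beta)$ is established, intersection is automatic: the curve $\phi(\beta)$ is disjoint from every element of $\phi(P)$ (as $\phi$ is simplicial) and distinct from every element of $\phi(P_\alpha)$ (combining the previous paragraph with the injectivity of $\phi$ on $P_\beta$, which gives $\phi(\beta) \notin \phi(P)$). But $\phi(P_\alpha)$ is a pants decomposition of $S$, so its complement contains no essential simple closed curves; therefore $\phi(\beta)$ must cross some curve of $\phi(P_\alpha)$, and this curve can only be $\phi(\alpha)$. This verifies all of the requirements of Definition \ref{D:detectable} for the pair $\phi(\alpha),\phi(\beta)$ inside $\phi(\XX)$, and the fact that they then fill a $\xi=1$ subsurface follows from the observation recorded immediately before the lemma statement.
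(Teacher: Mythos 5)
Your proof follows the same basic plan as the paper's---push forward $P_\alpha$ and $P_\beta$ and verify the conditions of Definition~\ref{D:detectable}---but supplies two details the paper treats as self-evident: that $\phi(\alpha) \ne \phi(\beta)$, and that $\phi(\alpha)$ and $\phi(\beta)$ actually intersect (so that ``$\phi(\XX)$--detectable intersection'' even applies). Both observations are correct and worth spelling out. In particular, the device of choosing a third curve $\gamma \in P$ so that $\alpha$ and $\beta$ both lie in $\mathrm{star}(\gamma)$, and then invoking local injectivity at $\gamma$, is exactly the right way to get $\phi(\alpha) \ne \phi(\beta)$ when $\xi(S) \geq 2$. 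The closing argument---that $\phi(\beta)$ must cross some curve of the pants decomposition $\phi(P_\alpha)$, and disjointness from $\phi(P)$ forces that curve to be $\phi(\alpha)$---is also clean.

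One small correction: your treatment of the $\xi(S)=1$ case is not right. In the modified Farey complex, intersecting curves need not be adjacent (for instance, $i(\alpha,\beta)=2$ on $S_{1,1}$); so for a nonadjacent intersecting pair, if $\XX=\{\alpha,\beta\}$ is a pair of isolated vertices, every simplicial map is locally injective and could collapse $\alpha$ to $\beta$. The lemma as literally stated has nothing to say there. Fortunately this is a non-issue: the lemma is only ever applied in the paper to the finite rigid sets on surfaces with $\xi(S) \geq 2$, where your main argument via the third curve $\gamma$ works. It would be better to simply remark that $P\ne\emptyset$ in all cases of interest than to try to treat the Farey case, which you cannot repair by that route.
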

\begin{proof}
Let $P_\alpha$ and $P_\beta$ the pants decompositions as in \eqref{E:detectable}. Since $\phi$ is locally injective and simplicial, it follows that $\phi(P_\alpha)$ and $\phi(P_\beta)$ are pants decompositions. Moreover,
\[ \phi(\alpha) \in \phi(P_\alpha), \, \phi(\beta) \in \phi(P_\beta), \mbox{ and } \phi(P_\alpha) - \phi(\alpha) =\phi( P_\beta) - \phi(\beta), \]
and hence $\phi(\alpha)$ and $\phi(\beta)$ have $\phi(\XX)$-detectable intersection. 
\end{proof}

\begin{definition}   
Let  $\alpha$ and $\beta$ be curves on $S$ which fill a $\xi=1$ subsurface $N \subset S$.  We say $\alpha$ and $\beta$ are {\em Farey neighbors} if they are adjacent in $\C(N)$.   We say that $\alpha$ and $\beta$ are {\em nearly Farey neighbors} if they are {\bf not} Farey neighbors, but $\alpha = T_\gamma^{\pm 1}(\beta)$, where $T_\gamma$ is a Dehn twist in a curve $\gamma$ which is a Farey neighbor of both $\alpha$ and $\beta$.
\end{definition}

We note that if $\alpha$ and $\beta$ are nearly Farey neighbors, then $N$ must be a four-holed sphere.

One should compare Lemma \ref{L:detectable2detectable} to \cite[Lemma 6]{Shackleton} and the notion of Farey neighbors with that of ``small intersection'' of \cite{Shackleton}.  
We also record the following observation and note the similarity with \cite[Lemma 1]{Ivanov}.

\begin{lemma} \label{L:minimallyfilling} Suppose $\alpha_1,\alpha_2,\alpha_3,\alpha_4$ are distinct curves on $S$, with $\alpha_2,\alpha_3$ filling a $\xi=1$ subsurface and 
\[ i(\alpha_i,\alpha_j) = 0 \Leftrightarrow |i-j| > 1\]
for all $i \neq j$.  Then $\alpha_2,\alpha_3$ are Farey neighbors or nearly Farey neighbors.

In addition, if $\alpha_1,\alpha_2,\alpha_3,\alpha_4 \in \XX \subset \C(S)$  and all intersections among the $\alpha_i$ are $\XX$--detectable, then for any locally injective simplicial map $\phi:\XX \to \C(S)$, the curves $\phi(\alpha_2),\phi(\alpha_3)$ are Farey neighbors or nearly Farey neighbors.
\end{lemma}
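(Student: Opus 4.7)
My plan is to prove the first assertion by analyzing the arcs contributed by $\alpha_1$ and $\alpha_4$ to $N := N(\alpha_2\cup\alpha_3)$ (either $S_{1,1}$ or $S_{0,4}$), and then to deduce the second from the first via Lemma~\ref{L:detectable2detectable}. The initial observation is that, since $\alpha_1$ is disjoint from $\alpha_3$ but meets $\alpha_2$, and since any essential simple closed curve in $N$ disjoint from $\alpha_3$ must---because $\xi(N)=1$---be isotopic either to $\alpha_3$ or to a component of $\partial N$, the curve $\alpha_1$ must cross $\partial N$ essentially. Hence $\alpha_1\cap N$ is a nonempty family of essential arcs with endpoints on $\partial N$, each disjoint from $\alpha_3$, at least one of which meets $\alpha_2$. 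Symmetrically, $\alpha_4\cap N$ is a nonempty family of essential arcs disjoint from $\alpha_2$ at least one of which meets $\alpha_3$; and since $i(\alpha_1,\alpha_4)=0$, the two families can be realized disjointly in $N$.

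The first assertion now splits by the topological type of $N$. When $N=S_{1,1}$, up to isotopy there is a unique essential arc with endpoints on $\partial N$ disjoint from a given filling curve; writing $a_{\alpha_i}$ for the arc dual to $\alpha_i$ ($i=2,3$), the family $\alpha_1\cap N$ is parallel copies of $a_{\alpha_3}$ and $\alpha_4\cap N$ is parallel copies of $a_{\alpha_2}$, so disjointness of the two families forces $i(a_{\alpha_2},a_{\alpha_3})=0$. In the integer-slope model of $\C(S_{1,1})$ one verifies the identity $i(a_\alpha,a_\beta)=\max(0,\,i(\alpha,\beta)-1)$ by a direct computation on the universal cover, and this vanishing yields $i(\alpha_2,\alpha_3)\le 1$; combined with the filling hypothesis it gives $i(\alpha_2,\alpha_3)=1$, so $\alpha_2,\alpha_3$ are Farey neighbors. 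When $N=S_{0,4}$, the curve $\alpha_3$ cuts $N$ into two pairs of pants, and each arc of $\alpha_1\cap N$ lies in one of them with endpoints on a fixed pair of boundary components of $S_{0,4}$; up to isotopy there are only finitely many essential arc classes in each pants, so I would run through the finite list of possible configurations of the $\alpha_1$- and $\alpha_4$-arc families and verify that, subject to the disjointness/intersection constraints above, the only ones compatible with $\alpha_2,\alpha_3$ filling $N$ force either $i(\alpha_2,\alpha_3)=2$ or $\alpha_2=T_\gamma^{\pm 1}(\alpha_3)$ for some common Farey neighbor $\gamma$.

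For the second assertion I verify that $\phi(\alpha_1),\ldots,\phi(\alpha_4)$ satisfy the hypotheses of the first part. For $|i-j|>1$ the edge between $\alpha_i,\alpha_j$ in $\XX$ maps under $\phi$ to an edge of $\C(S)$ and $\alpha_j$ lies in the star of $\alpha_i$, so $i(\phi(\alpha_i),\phi(\alpha_j))=0$ and local injectivity of $\phi$ on that star gives $\phi(\alpha_i)\neq\phi(\alpha_j)$. For $|i-j|=1$ the $\XX$-detectability of the intersection of $\alpha_i,\alpha_j$, combined with Lemma~\ref{L:detectable2detectable}, shows that $\phi(\alpha_i),\phi(\alpha_j)$ fill a $\xi=1$ subsurface and are in particular distinct with nonzero intersection. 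Applying the first part to the four images then gives the desired conclusion. The principal obstacle is the $S_{0,4}$ enumeration: unlike in $S_{1,1}$, where a single arc class governs the situation, a four-holed sphere admits several essential arc classes disjoint from a given filling curve, so ruling out configurations other than Farey and nearly Farey neighbors requires a patient side-by-side combinatorial verification.
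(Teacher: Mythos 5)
Your proof is correct and takes essentially the same approach as the paper: extract the arcs of $\alpha_1\cap N$ and $\alpha_4\cap N$, observe they can be realized disjointly, and then exploit the fact that in a $\xi=1$ surface an essential arc determines the unique essential curve disjoint from it (and vice versa) to pin down the configuration of $\alpha_2,\alpha_3$. The only stylistic difference is that in the $S_{1,1}$ case you invoke the arc-intersection formula rather than a direct inspection, while in the $S_{0,4}$ case both your argument and the paper's ultimately defer to a finite enumeration of configurations; the paper additionally records that the nearly-Farey case arises exactly when $\delta_1$ and $\delta_4$ run between the same pair of boundary components.
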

\begin{proof}
The second part follows from the first and Lemma \ref{L:detectable2detectable} so we need only prove the first part.
The assumption on intersections implies that there are essential arcs $\delta_i \subset \alpha_i \cap N(\alpha_2\cup\alpha_3)$, for $i \in \{1,4\}$, with $\delta_1,\delta_4$  disjoint.  The curves $\alpha_2$ and $\alpha_3$ are the unique curves in $N(\alpha_2\cup\alpha_3)$ disjoint from $\delta_4$ and $\delta_1$, respectively, and since they fill $N(\alpha_2 \cup \alpha_3)$, by inspecting the possible configurations, we see that they are Farey neighbors or nearly Farey neighbors.
\end{proof}

In the previous proof, we note that up to homeomorphism, there is only one configuration of the arcs $\delta_1,\delta_4$ in $N$ which makes $\alpha_2,\alpha_3$ into nearly Farey neighbors.  Namely, if $N$ is a four-holed sphere and $\delta_1$ has endpoints on the same two distinct boundary components as $\delta_4$ (for example, consider a curve $\gamma$ intersecting $\delta_1$ in a single point, then set $\delta_4 = T_\gamma^{\pm 1}(\delta_1)$).
This observation will be used in the next section.




\section{Spheres}
\label{S:spheres}

In this section we prove Theorem \ref{T:main} for  $S=S_{0,n}$.  If $n \leq 3$, then $\C(S) $ is empty and the result is trivially true.  If $n = 4$, $\C(S)$ is isomorphic to the Farey complex, and thus we may pick any triangle in $\C(S)$ for the subcomplex $\XX$.
We therefore assume $n \geq 5$ and define $\XX \subset \C(S)$ as follows.   
We represent $S$ as the double of a regular $n$--gon $\Delta$ with vertices removed.  An arc connecting non-adjacent sides of $\Delta$ doubles to a curve on $S$.  We let $\XX \subset \C(S)$ denote the subset of curves on $S$ obtained by connecting every non-adjacent pair of sides by a straight line segment and then doubling; see Figure \ref{F:octagonarcs} for the case $n = 8$.  
We index the sides of $\Delta$ in a cyclic order, with indices $1,\ldots,n$, and denote the curve in $\XX$ defined by an arc connecting the $i^{th}$ and $j^{th}$ sides by $\alpha_{i,j}$.

\begin{figure}[htb]
\begin{center}
\includegraphics[width=1.5in,height=1.5in]{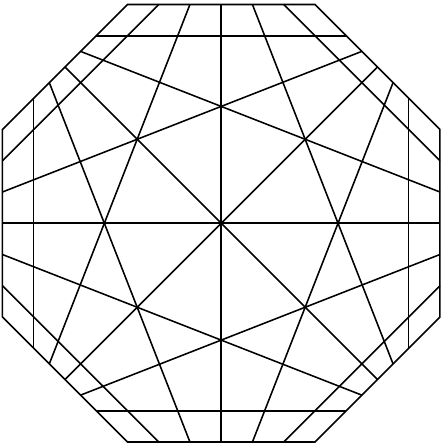} \caption{Octagon and arcs for $S_{0,8}$.} \label{F:octagonarcs}
\end{center}
\end{figure}

We will prove:

\begin{theorem}\label{T:sphere}
For any locally injective simplicial map $\phi:\XX \to \C(S)$, there exists a unique $h \in \Mod(S)$ such that $h|_\XX = \phi$.
\end{theorem}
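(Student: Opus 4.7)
The plan is to exploit the bijection between curves in $\XX$ and chords of the polygon $\Delta$, under which pants decompositions contained in $\XX$ correspond to triangulations of $\Delta$. I would begin by verifying that every intersecting pair $\alpha_{i,j}, \alpha_{k,l} \in \XX$ has $\XX$--detectable intersection. If the underlying chords cross, their endpoints occur in cyclic order $i,k,j,l$ around $\partial\Delta$, and one can build a triangulation of $\Delta$ containing the chord $\{i,j\}$ whose two adjacent triangles have third corners on sides $k$ and $l$ respectively; the flip across $\{i,j\}$ in this triangulation then produces $\{k,l\}$. The bordering chords $\{i,k\}, \{k,j\}, \{j,l\}, \{l,i\}$ either correspond to curves in $\XX$ or degenerate to portions of $\partial\Delta$ when the sides are adjacent, and the four outer regions of $\Delta$ are completed to a triangulation by further chords in $\XX$.

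Combining this with Lemmas \ref{L:detectable2detectable} and \ref{L:minimallyfilling}, for each intersecting pair $\alpha, \beta \in \XX$ one produces a 4-tuple $\alpha_1, \alpha, \beta, \alpha_4 \subset \XX$ satisfying the hypotheses of Lemma \ref{L:minimallyfilling}, so that $\phi(\alpha), \phi(\beta)$ fill a $\xi = 1$ subsurface of $S$ in which they are Farey or nearly Farey neighbors; since $S$ has no genus, this subsurface is an $S_{0,4}$. To rule out the nearly Farey case I would apply the lemma with a second 4-tuple $\alpha_1', \alpha, \beta, \alpha_4' \subset \XX$ whose witness arcs in $N(\phi(\alpha) \cup \phi(\beta))$ land on a different pair of boundary components; by the observation following Lemma \ref{L:minimallyfilling}, a nearly Farey pair admits only one such configuration up to homeomorphism, so one of the two 4-tuples forces the Farey conclusion.

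With the intersection combinatorics pinned down, I would build the candidate $h$ using the cycle of short curves $\alpha_{1,3}, \alpha_{2,4}, \ldots, \alpha_{n,2}$. Each $\alpha_{i,i+2}$ bounds a twice-punctured disk containing the consecutive punctures $v_{i,i+1}$ and $v_{i+1,i+2}$, consecutive curves in the cycle are Farey neighbors, and cyclically farther pairs are disjoint. The image cycle has the same combinatorics, and a topological analysis --- consecutive Farey-neighbor disks must overlap in exactly one puncture, and cascading around the $n$-cycle accounts for every puncture of $S$ exactly once --- forces each $\phi(\alpha_{i,i+2})$ to also bound a twice-punctured disk. The cyclic arrangement of these punctured disks then determines a cyclic ordering of the punctures of $S$, and any $h \in \Mod(S)$ realizing this ordering and mapping $\alpha_{1,3}$ to $\phi(\alpha_{1,3})$ satisfies $h(\alpha_{i,i+2}) = \phi(\alpha_{i,i+2})$ for all $i$.

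The main obstacle is verifying $h|_\XX = \phi$ on the remaining ``long'' curves $\alpha_{i,j}$, those with sides $i, j$ at cyclic distance at least $3$ in $\partial\Delta$. After replacing $\phi$ with $h^{-1} \circ \phi$ I may assume $\phi$ fixes every short curve, and I would then induct on cyclic distance: at each stage $\alpha_{i,j}$ should be the unique curve in $\C(S)$ disjoint from the already-fixed short curves $\alpha_{i,i+2}, \alpha_{i+1,i+3}, \ldots, \alpha_{j-2,j}$ that also has $\XX$--detectable intersection with a designated crossing witness in $\XX$. Uniqueness of $h$ follows because the short curves alone fill $S$, so the pointwise stabilizer of $\XX$ in $\Mod(S)$ is trivial.
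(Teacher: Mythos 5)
Your overall strategy matches the paper's: establish $\XX$--detectability, upgrade to Farey neighbors, build a candidate homeomorphism from the cycle of chain curves, then verify on the remaining curves. The detectability argument via polygon triangulations is essentially the paper's Lemma~\ref{L:detectsphere} in different language, and the construction of $h$ from the chain cycle is the paper's approach as well. However, there are two points where the proposal as written has gaps.

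First, the ruling-out of the nearly Farey case is under-specified. You propose applying Lemma~\ref{L:minimallyfilling} with a second $4$--tuple ``whose witness arcs in $N(\phi(\alpha)\cup\phi(\beta))$ land on a different pair of boundary components,'' but you never explain how to control, purely from the simplicial data of $\phi$, \emph{which} boundary components the image arcs land on. The paper handles this by choosing $\gamma,\delta$ to intersect exactly one curve of $P_\alpha-\alpha=P_\beta-\beta$; then $\phi(\gamma),\phi(\delta)$ intersect exactly one curve of $\phi(P_\alpha)-\phi(\alpha)$, forcing the arcs of $\phi(\gamma),\phi(\delta)$ inside $N(\phi(\alpha)\cup\phi(\beta))$ to have both endpoints on a \emph{single} boundary component. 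Since the unique nearly Farey configuration requires arcs hitting two distinct boundary components, this directly excludes it. Your two--$4$--tuple idea could probably be made to work, but without a mechanism to pin down arc landings it is not a complete argument.

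Second, and more seriously, the final induction step is wrong as stated. You claim $\alpha_{i,j}$ is ``the unique curve in $\C(S)$ disjoint from the already-fixed short curves $\alpha_{i,i+2},\alpha_{i+1,i+3},\ldots,\alpha_{j-2,j}$ that also has $\XX$--detectable intersection with a designated crossing witness.'' The listed chain curves are only those lying on \emph{one} side of $\alpha_{i,j}$; the chain curves on the other side of $\alpha_{i,j}$ are also disjoint from it, and without them the set is far from almost-filling. There are many curves disjoint from $\alpha_{i,i+2},\ldots,\alpha_{j-2,j}$, living in the large complementary region, and a single ``crossing witness'' will not in general single out $\alpha_{i,j}$. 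The paper instead uses the set $A_{i,j}=\CC\setminus\{\alpha_{i-1,i+1},\alpha_{j-1,j+1}\}$, i.e.\ \emph{all} chain curves disjoint from $\alpha_{i,j}$, which is almost-filling and uniquely determines $\alpha_{i,j}$; then $h(A_{i,j})=\phi(A_{i,j})$ uniquely determines $h(\alpha_{i,j})$, and since $\phi(\alpha_{i,j})$ is also disjoint from $\phi(A_{i,j})$, the two must coincide. Replacing your partial chain-curve set with all of $A_{i,j}$ and dropping the ad hoc crossing witness would repair this step.
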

We note that the pointwise stabilizer $H_\XX < \Mod^\pm(S)$ has order two, and is generated by an orientation-reversing involution (interchanging the two copies of $\Delta$). This is the reason we can conclude the existence of a unique element of $\Mod(S)$, rather than an element of $\Mod^\pm(S)$.

Before starting the proof of Theorem \ref{T:sphere}, we need:

\begin{lemma}
The intersection of any two elements of $\XX$ is $\XX$-detectable.
\label{L:detectsphere}
\end{lemma}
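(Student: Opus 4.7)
The plan is to use the natural bijection between pants decompositions of $S$ contained in $\XX$ and triangulations of an $n$-gon. Let $P$ denote an abstract $n$-gon with vertices labeled by the sides $s_1,\ldots,s_n$ of $\Delta$; a non-side chord $ab$ of $P$ corresponds to the curve $\alpha_{a,b}\in\XX$, and two chords are non-crossing in $P$ if and only if the corresponding arcs may be drawn disjointly in $\Delta$, which in turn holds if and only if $\alpha_{a,b}$ and $\alpha_{c,d}$ are disjoint in $S$. A triangulation of $P$ by $n-3$ non-crossing chords therefore yields $n-3=\xi(S)$ pairwise disjoint essential curves in $\XX$, i.e., a pants decomposition of $S$ contained in $\XX$.

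Given the intersecting pair $\alpha_{i,j},\alpha_{k,l}\in\XX$, the chords $ij$ and $kl$ cross in $P$, so (after possibly swapping $k$ and $l$) the indices $i,k,j,l$ appear in this cyclic order around $P$. The goal is to construct a triangulation $T$ of $P$ containing $ij$ in which the two triangles meeting along $ij$ are $ikj$ and $ijl$. The diagonal flip $T'=(T\setminus\{ij\})\cup\{kl\}$ is then again a triangulation of $P$ (replacing one diagonal of the quadrilateral $ikjl$ by the other), and the pants decompositions $P_\alpha,P_\beta$ corresponding to $T$ and $T'$ will satisfy $\alpha_{i,j}\in P_\alpha$, $\alpha_{k,l}\in P_\beta$, and $P_\alpha-\alpha_{i,j}=P_\beta-\alpha_{k,l}$, establishing $\XX$-detectability.

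To build such a $T$, I first take the subset of $\{ij,ik,kj,jl,li\}$ consisting of genuine chords of $P$ (discarding any edge whose two indices are cyclically adjacent in $\{1,\ldots,n\}$, since such an edge is a polygon side of $P$ rather than a chord). These chords are pairwise non-crossing, as they all lie in the closed quadrilateral $ikjl$; together with the relevant polygon sides they bound the two central triangles $ikj,ijl$ and up to four outer sub-polygons of $P$, each having strictly fewer than $n$ vertices. Triangulating each outer sub-polygon by induction and adjoining all resulting chords completes $T$. The main obstacle, and the only real subtlety, is the degenerate case in which one or more of the peripheral edges $ik,kj,jl,li$ is actually a polygon side of $P$: the corresponding outer sub-polygon then reduces to a single polygon edge and contributes no chords, but the triangles $ikj$ and $ijl$ remain legitimate (with the polygon side playing the role of one of their edges), and a direct count confirms that $|T|=n-3$ uniformly, so the flip producing $T'$ goes through in every case.
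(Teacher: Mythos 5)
Your proof is correct and takes essentially the same approach as the paper: the paper simply picks a maximal collection $\hat P$ of pairwise disjoint arcs in $\Delta$ disjoint from $\hat \alpha \cup \hat\beta$ and observes that $\hat P \cup \hat\alpha$ and $\hat P \cup \hat\beta$ are the desired pants decompositions. Your reformulation via the dual polygon $P$, the quadrilateral $ikjl$, and the diagonal flip is exactly this construction made explicit, with the diagonal count and degenerate-side check supplying the verification that the paper leaves to the figure.
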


\begin{proof}
The intersection of any two curves $\alpha,\beta \in \XX$ comes from an intersection of the defining arcs $\hat \alpha,\hat \beta$ in $\Delta$.  Take a maximal collection of pairwise disjoint arcs $\hat P$ in $\Delta$ disjoint from $\hat \alpha \cup \hat \beta$.  Then $\hat P \cup \hat \alpha$ and $\hat P \cup \hat \beta$ define the desired pants decompositions $P_\alpha$ and $P_\beta$; see Figure \ref{F:octagondetect}.
\end{proof}

\begin{figure}[htb]
\labellist
\small\hair 2pt
 \pinlabel {$\hat \alpha$} [ ] at 42 78
 \pinlabel {$\hat \beta$} [ ] at 40 25
\endlabellist
\begin{center}
\includegraphics[width=1.5in,height=1.5in]{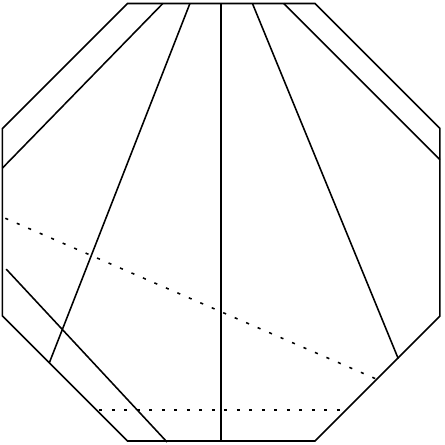}
\caption{Intersecting arcs $\hat \alpha$ and $\hat \beta$ in an octagon, together with $\hat P$, a maximal collection of pairwise disjoint arcs disjoint from $\hat \alpha \cup \hat \beta$.  The dotted lines correspond to curves $\gamma,\delta$ from the proof of Theorem \ref{T:sphere}.}
\label{F:octagondetect}
\end{center}
\end{figure}

\begin{proof}[Proof of Theorem \ref{T:sphere}]

Let $\alpha,\beta\in\XX$ be two curves with $i(\alpha,\beta)\ne 0$.  By inspection there exists $\gamma,\delta \in \XX$ with $i(\alpha,\gamma) = i(\beta,\delta) = i(\gamma,\delta) = 0$ and $i(\gamma,\beta) \neq 0 \neq i(\delta,\alpha)$.  Furthermore, if $P_\alpha,P_\beta$ are the pants decompositions illustrating the $\XX$--detectability of $i(\alpha,\beta) \neq 0$, then we can choose $\gamma,\delta$ to both have nonzero intersection number with exactly one other curve of $P_\alpha - \alpha = P_\beta - \beta$; see Figure \ref{F:octagondetect}.  This curve is of course one of the boundary components of the four-holed sphere filled by $\alpha$ and $\beta$.

By Lemmas \ref{L:minimallyfilling} and \ref{L:detectsphere}, $\phi(\alpha)$ and $\phi(\beta)$ are Farey neighbors or nearly Farey neighbors.  As every curve in $S$ is separating, $\phi(\alpha)$ and $\phi(\beta)$ fill a four-holed sphere $N  = N(\phi(\alpha) \cup \phi(\beta))$.  Because $S$ is a sphere with holes, no two holes of $N$ correspond to the same closed curve in $S$ (such a curve would have to be nonseparating).  By our choice of $\gamma,\delta$, the arcs of intersection of $\phi(\gamma)$ and $\phi(\delta)$ with $N$ have their endpoints on a single boundary component, namely, the $\phi$--image of the unique curve in $P_\alpha - \alpha = P_\beta - \beta$ intersected by  $\phi(\gamma)$ and $\phi(\delta)$.  It follows from the remarks following Lemma \ref{L:detectsphere} that $\phi(\alpha),\phi(\beta)$ are Farey neighbors.

We let $\CC \subset \XX$ denote the set of curves in $\XX$ that bound a disk containing exactly two punctures of $S$.  Equivalently,
\[ \CC = \{ \alpha_{i,j} \in \XX \mid i - j = \pm 2 \mbox{ mod } n \}. \]
We will refer to the curves in $\CC$ as {\em chain} curves  as, taken together, they form a kind of ``cyclic chain'' of curves around $S$; see Figure \ref{F:octagonchain}.
\begin{figure}[htb]
\begin{center}
\includegraphics[width=1.5in, height=1.5in]{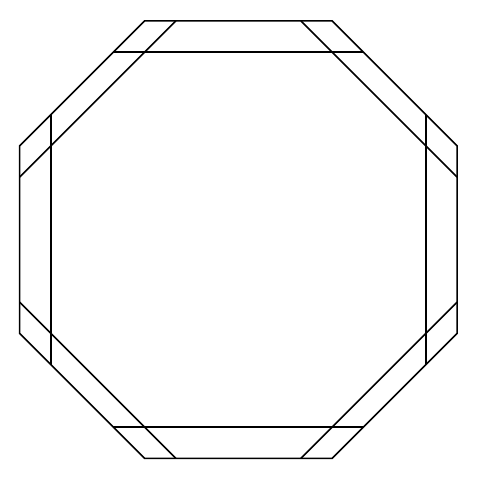} \caption{Arcs defining a chain in an octagon.} \label{F:octagonchain}
\end{center}
\end{figure}

We will refer to a set of curves in $\CC$ as being {\em consecutive} if the indices form an interval in $\{1,\ldots,n\}$ with the {\em cyclic} order obtained by reducing modulo $n$.  Equivalently, a set of curves in $\CC$ is consecutive if their union on $S$ is connected.

Using that $S$ is a (punctured) sphere and so every curve is separating, plus the fact that a pair of consecutive curves are Farey neighbors in a four-holed sphere, we deduce that the neighborhood of any three consecutive curves $\alpha,\beta,\gamma$ in $\CC$ has a regular neighborhood homeomorphic to a regular neighborhood of $\phi(\alpha),\phi(\beta),\phi(\gamma)$.   Since consecutive curves have connected union, this in turn implies that a regular neighborhood $N$ of the union of all curves in $\CC$ is homeomorphic to a regular neighborhood $N'$ of all curves in $\phi(\CC)$ sending each $\alpha_{i,j} \in \CC$ to $\phi(\alpha_{i,j}) \in \phi(\CC)$.

There are $n$ bigons in the boundary of both $N$ and $N'$.  Bigons in $\partial N$ each bound a once-punctured disk.  Since all intersections are essential, it follows that the bigons in $\partial N'$ also bound once-punctured disks.   Since all other boundary components of both $N$ and $N'$ bound (unpunctured) disks, the homeomorphism $N \to N'$ extends to a homeomorphism $h:S \to S$.  Composing with an appropriate element of $H_\XX$ we assume $h$ is orientation-preserving.

Finally, given $ \alpha_{i,j} \in \XX\setminus \CC$, consider the set $A_{i,j} \subset \CC $ of chain curves which are disjoint from $\alpha_{i,j}$; more concretely, $A_{i,j} = \CC \setminus \{\alpha_{i-1,i+1}, \alpha_{j-1,j+1}\}$. Observe that $A_{i,j}$ is an almost filling set, and that $\alpha_{i,j}$ is uniquely determined by $A_{i,j}$.  Since $h$ is a homeomorphism, $h(A_{i,j}) = \phi(A_{i,j})$ is an almost filling set and $h(\alpha_{i,j})$ is uniquely determined by $\phi(A_{i,j})$.  Since $\phi(\alpha_{i,j})$ is also disjoint from $\phi(A_{i,j})$, we must have $\phi(\alpha_{i,j}) = h(\alpha_{i,j})$.  Therefore, $\phi(\alpha) = h(\alpha)$ for all $\alpha \in \XX$ and so Theorem \ref{T:sphere} follows.
\end{proof}

\subsection{Spheres in curve complexes of spheres}
\label{S:spheresofcurves}

Let $\Delta$ be a polygon with $n\ge 4$ vertices. Lee \cite{Lee} proved that the simplicial complex whose $k$-simplices correspond to sets of $k+1$ pairwise disjoint diagonals of $\Delta$
is a simplicial sphere of dimension $n-4$. There is a natural bijection between the set of diagonals of $\Delta$ and the set $\XX$ for $S_{0,n}$, by considering the dual polygon of $\Delta$. Therefore, as mentioned in the introduction, we have:

\begin{theorem}[\cite{Lee}]
The complex $\XX \subset \C(S_{0,n})$ is a simplicial sphere of dimension $n-4$.
\label{T:Lee}
\end{theorem}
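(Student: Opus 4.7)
The plan is to produce an explicit simplicial isomorphism between $\XX$ and the complex $D(\Delta^*)$ of non-crossing diagonals of the dual polygon $\Delta^*$, and then quote Lee's result.

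First I would set up the bijection on vertices. Let $\Delta^*$ be the dual $n$-gon of $\Delta$: its vertices are indexed by the sides of $\Delta$ in the cyclic order, and two vertices are joined by an edge of $\Delta^*$ precisely when the corresponding sides of $\Delta$ are cyclically adjacent. A diagonal of $\Delta^*$ is then a segment joining two non-adjacent vertices, and the assignment
\[ \alpha_{i,j} \longmapsto \text{the diagonal of } \Delta^* \text{ joining the } i\text{th and }j\text{th vertices}\]
is a bijection between the vertex set of $\XX$ and the set of diagonals of $\Delta^*$.

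Second, I would verify that this bijection is compatible with the simplicial structures. Two curves $\alpha_{i,j}, \alpha_{k,l} \in \XX$ span an edge of $\XX$ iff they are disjoint on $S_{0,n}$, iff the defining straight arcs $\hat\alpha_{i,j}, \hat\alpha_{k,l}$ in $\Delta$ can be isotoped (with endpoints sliding along the prescribed sides of $\Delta$) to be disjoint. An elementary planar argument, using only the cyclic order of $\{1,\ldots,n\}$, shows this happens if and only if the corresponding diagonals of $\Delta^*$ do not cross in their interiors. Extending vertex-wise, the bijection above induces a simplicial isomorphism $\XX \cong D(\Delta^*)$, since simplices on both sides are characterized by pairwise compatibility.

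Finally, I would invoke Lee's theorem from \cite{Lee}: the simplicial complex $D(\Delta^*)$ of sets of pairwise non-crossing diagonals of an $n$-gon is the boundary complex of the (simplicial) dual associahedron, and therefore is a simplicial sphere of dimension $n-4$. Combined with the isomorphism $\XX \cong D(\Delta^*)$, this gives the claim. The only real content beyond the citation is the simplicial isomorphism, and the only delicate point there is the disjointness-versus-non-crossing equivalence; this is the step I would write most carefully, but it is a finite combinatorial verification rather than a genuine obstacle.
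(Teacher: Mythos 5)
Your proposal matches the paper's argument exactly: the paper also passes to the dual polygon to set up the bijection between curves in $\XX$ and diagonals, and then cites Lee's theorem that the complex of pairwise non-crossing diagonals of an $n$-gon is a simplicial $(n-4)$-sphere. You simply spell out the disjointness-versus-non-crossing verification that the paper treats as immediate.
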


A  result of Harer \cite{Harer} states that $\C(S_{0,n})$  is homotopy equivalent to a wedge of spheres of dimension $n-4$. 
We remark that the sphere $\XX\subset \C(S_{0,6})$ had previously been identified by Broaddus \cite{Broaddus}, who proved that  $\XX$ is non-trivial  in
$H_2(\C(S_{0,6}), \mathbb{Z})$ (and hence also in $\pi_2(\C(S_{0,6}))$). Thus, a natural question is to decide whether $\XX$ represents a non-trivial element of $H_{n-4}(\C(S_{0,n}), \mathbb{Z})$  for all $n \ge 7$; compare with Question \ref{Q:homology} in the introduction. 




\section{Tori}
\label{S:tori}

In this section we  prove Theorem \ref{T:main} for $S=S_{1,n}$. If $n\le 1$ then $\C(S)$ is isomorphic to the Farey complex, and again we may choose any triangle in $\C(S)$ for  $\XX$. If $n=2$, the result follows from the previous section, since $\C(S_{1,2})$ and $\C(S_{0,5})$ are isomorphic, although we must replace $\Mod^\pm$ in the conclusion of Theorem \ref{T:sphere} with $\Aut(\C)$; see \cite{Luo}, for instance. 

Thus, from now on we assume that $S$ has $n \geq 3$ punctures. We realize $S$ as the unit square in $\mathbb{R}^2$ with opposite sides identified, with all the punctures of $S$ arranged along a horizontal line of the square. 
Let $\alpha_1, \ldots, \alpha_n$ be curves on $S$, each defined by a vertical segment on the square, and indexed so that $\alpha_i$ and  $\alpha_{i+1}$ together bound an
 annulus containing exactly one puncture of $S$,  for all $i,i+1$ mod $n$. Let $\beta$ be a curve on $S$, defined by a horizontal segment on the square, such that $i(\alpha_j,\beta)=1$ for all $j=1,\ldots, n$; in particular, $\beta$ and $\alpha_j$ are Farey neighbors for all $j$. We set
$$\CC = \{ \alpha_i | i= 1, \ldots, n\} \cup  \{ \beta\};$$ in analogy with Section \ref{S:spheres},  we will refer to the elements of $\CC$ as {\em chain} curves.

Let $\OO$ be the set whose elements are boundary curves of essential subsurfaces filled by connected unions of elements of $\CC$; we will call the elements of $\OO$ {\em outer curves}. See Figure \ref{F:toruscurves} for an example of each type of curve.  We note that each outer curve bounds a disk containing at least two punctures of $S$.
 
 \begin{figure}[htb]
\begin{center}
\includegraphics[width=1.5in, height=1.5in]{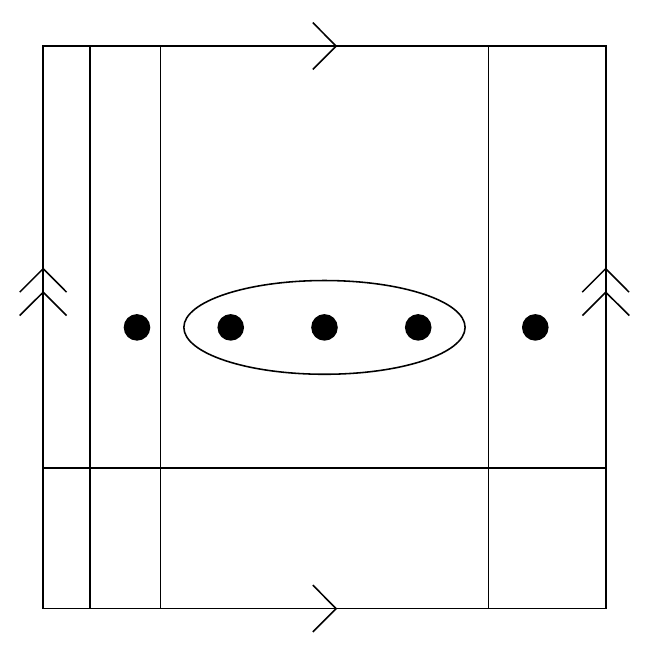} \caption{Four chain curves and an outer curve in $S_{1,5}$.} \label{F:toruscurves}
\end{center}
\end{figure}

 Let $\XX = \CC \cup \OO$. We will prove:

\begin{theorem}
For any locally injective simplicial map $\phi:\XX \to \C(S)$, there exists a unique $h \in \Mod(S)$ such that $h|_\XX = \phi$.
\label{T:tori}
\end{theorem}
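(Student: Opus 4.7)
The approach closely parallels the proof of Theorem~\ref{T:sphere}, adapted to the chain structure in $S_{1,n}$. First, I would verify that every pair of intersecting curves in $\XX$ has $\XX$--detectable intersection. Within $\CC$, the intersecting pairs are the $(\beta,\alpha_j)$ with $i(\beta,\alpha_j)=1$; to detect such a pair, one exhibits pants decompositions $P_\beta$ and $P_{\alpha_j}$ in $\XX$ sharing $n-1$ curves disjoint from both $\beta$ and $\alpha_j$. A natural choice is the nested family of outer curves $\gamma_k=\partial N(\beta\cup\alpha_j\cup\alpha_{j+1}\cup\cdots\cup\alpha_{j+k})\in\OO$ for $k=0,1,\ldots,n-2$: these bound successively larger subsurfaces containing $N(\beta\cup\alpha_j)$ and together decompose the complementary $n$-punctured disk into once-punctured annuli. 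Intersections in $\XX$ involving outer curves are handled by analogous local constructions inside the filled subsurface witnessing the intersection.

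By Lemma~\ref{L:detectable2detectable}, each pair $\phi(\beta),\phi(\alpha_j)$ has detectable intersection and fills a $\xi=1$ subsurface $N_j$. To conclude they are Farey neighbors with $i(\phi(\beta),\phi(\alpha_j))=1$, I must rule out $N_j$ being a four-holed sphere. The key observation is that the other $\phi(\alpha_i)$ with $i\ne j$ are pairwise disjoint from $\phi(\alpha_j)$ (since $\phi$ sends the simplex $\{\alpha_i,\alpha_j\}$ to a simplex) yet each must intersect $\phi(\beta)$ (by Lemma~\ref{L:detectable2detectable} applied to the detectable intersection of $\beta,\alpha_i$). In $S_{0,4}$, essential arcs disjoint from $\phi(\alpha_j)$ are confined to one of the two pants components of $N_j\setminus\phi(\alpha_j)$, and accommodating $n-1\ge 2$ pairwise disjoint curves all crossing $\phi(\beta)$ without crossing $\phi(\alpha_j)$ is incompatible with $N_j$ being a four-holed sphere once $n\ge 3$. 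Hence $N_j$ is a once-holed torus.

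Next I would reconstruct an orientation-preserving candidate homeomorphism $h\co S\to S$ from the combinatorial data preserved by $\phi$ on $\CC$. The curves $\phi(\alpha_1),\ldots,\phi(\alpha_n)$ are pairwise disjoint and each meets $\phi(\beta)$ once, so they form a chain of the same combinatorial type as the original. To pin down the cyclic arrangement of punctures, I would use the outer curve bounding the filled subsurface of $\{\beta,\alpha_i,\alpha_{i+1}\}$: its $\phi$-image also bounds a region separating off a single puncture, and detectability places it between $\phi(\alpha_i)$ and $\phi(\alpha_{i+1})$, pinning down which puncture of $S$ sits between consecutive vertical curves. This fixes the cyclic order of punctures relative to the $\phi(\alpha_i)$'s, so the combinatorial description of $\phi(\CC)$ determines $h$ up to the pointwise stabilizer of $\CC$ in $\Mod^\pm(S)$; absorbing such an element produces $h\in\Mod(S)$ with $h|_\CC=\phi|_\CC$.

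Finally, each outer curve $\gamma\in\OO$ is uniquely determined by the almost-filling set $A_\gamma=\CC\setminus B_\gamma$, where $B_\gamma\subset\CC$ is the connected union of chain curves whose filled subsurface is bounded by $\gamma$. Since $\phi(A_\gamma)=h(A_\gamma)$ is almost-filling with $h(\gamma)$ as its unique determined curve, and $\phi(\gamma)$ is disjoint from $\phi(A_\gamma)$, uniqueness forces $\phi(\gamma)=h(\gamma)$, completing the extension. The main obstacle is the second paragraph: excluding the four-holed-sphere possibility for $N_j$ requires delicate use of the many chain curves simultaneously disjoint from $\alpha_j$, since the separating/non-separating dichotomy exploited in Theorem~\ref{T:sphere} (where every curve is separating) is unavailable in $S_{1,n}$.
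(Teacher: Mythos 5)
Your overall strategy (detect intersections, rule out the four-holed sphere, build $h$ from $\CC$, then propagate to $\OO$) matches the paper's, but the crucial second paragraph has a genuine gap. You claim that accommodating $n-1$ pairwise disjoint curves all crossing $\phi(\beta)$ while missing $\phi(\alpha_j)$ is incompatible with $N_j$ being a four-holed sphere. But the curves $\phi(\alpha_i)$ for $i\ne j$ need not be \emph{contained} in $N_j$; they meet $N_j$ only in arcs, and a pair-of-pants can carry arbitrarily many pairwise disjoint parallel copies of an essential arc. So there is no finiteness obstruction of the kind you suggest, and the argument as written does not rule out the four-holed sphere. You also never establish that all but one hole of $N_j$ is a puncture of $S$, which is essential here. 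The paper first proves this (via a second claim: for each $\gamma$ in the pants decomposition of $S\setminus N(\alpha_i\cup\beta)$ other than $\partial N$, there is a $\gamma'\in\XX$ disjoint from $N(\alpha_i\cup\beta)$ with $i(\gamma,\gamma')\ne 0$ $\XX$-detectable, forcing $\phi(\gamma)$ not to be a hole of $\phi(N_i)$), and only then runs a different argument: if $N_j$ were a four-holed sphere with three punctures of $S$, then \emph{every} $\phi(\alpha_i)$ and $\phi(\beta)$ would bound a twice-punctured disk $D_i$, $D_\beta$; since $D_i$ and $D_\beta$ share a puncture and $S$ has only $n$ punctures with $n\ge 3$, two of $D_1,D_2,D_3$ must share a puncture, forcing $\phi(\alpha_1)$ and $\phi(\alpha_2)$ to intersect, contradicting that $\phi$ is simplicial.

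There is also a gap in your final paragraph: the set of chain curves disjoint from an outer curve $\gamma$ uniquely determines $\gamma$ only when $\gamma\in\OO_2$. For $\gamma\in\OO_k$ with $k\ge 3$, the $k$-punctured disk bounded by $\gamma$ contains other essential curves disjoint from all of $\CC\setminus B_\gamma$, so your $A_\gamma$ is not enough. The paper fixes this by induction on $k$, enlarging the almost-filling set with curves from $\OO_2\cup\cdots\cup\OO_{k-1}$. (Your description of $A_\gamma$ as $\CC\setminus B_\gamma$ with $B_\gamma$ the chain curves filling the subsurface bounded by $\gamma$ is also backwards: that complement can contain curves \emph{intersecting} $\gamma$.) The rest of the proposal, including the nested-outer-curve detection and the construction of $h$ from $\phi|_\CC$, is in line with the paper.
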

As in Section \ref{S:spheres}, $H_\XX < \Mod^\pm(S)$ is generated by an orientation-reversing involution, hence the conclusion again that $h \in \Mod(S)$, versus $\Mod^\pm(S)$.

\medskip

Our first step is to prove that enough intersections are detectable:

\begin{lemma} \label{L:torusdetection}
For any two curves $\gamma_1,\gamma_2 \in \XX$ with $0 < i(\gamma_1,\gamma_2) \leq 2$, their intersection is $\XX$-detectable. 
\label{L:toridetect}
\end{lemma}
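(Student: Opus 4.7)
The plan is a case analysis on the types of $\gamma_1$ and $\gamma_2$. Since the $\alpha_j$'s are pairwise disjoint and every outer curve is disjoint from $\beta$ (being the boundary of a subsurface containing $\beta$), and since each $\alpha_j$ is non-separating while every outer curve is separating, the algebraic intersection of $\alpha_j$ with any outer curve---and of two outer curves with each other---vanishes, forcing the geometric intersection numbers to be even. Thus the hypothesis $0<i(\gamma_1,\gamma_2)\le 2$ leaves three subcases: (a) $\{\gamma_1,\gamma_2\}=\{\alpha_j,\beta\}$ with $i=1$; (b) $\gamma_1=\alpha_j$ and $\gamma_2=\delta\in\OO$ with $i=2$; (c) $\gamma_1,\gamma_2\in\OO$ with $i=2$.

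The main building block is a nested family of outer curves from consecutive subsets. For each $j\in\{1,\ldots,n\}$ and $k\in\{0,1,\ldots,n-2\}$, let $A_{j,k}=\{\beta,\alpha_j,\alpha_{j+1},\ldots,\alpha_{j+k}\}\subset\CC$ (indices mod $n$); this set is consecutive and connected through $\beta$, and determines a single outer curve $\delta_{j,k}\in\OO$ bounding a disk with $n-k$ punctures. Since $A_{j,k}\subsetneq A_{j,k+1}$, the corresponding filled subsurfaces nest and the $\delta_{j,k}$ are pairwise disjoint. One then checks that both $\{\alpha_j\}\cup\{\delta_{j,k}\}_{k=0}^{n-2}$ and $\{\beta\}\cup\{\delta_{j,k}\}_{k=0}^{n-2}$ are pants decompositions of $S=S_{1,n}$: the once-bordered torus side of $\delta_{j,0}$ is cut into a pair of pants by either $\alpha_j$ or $\beta$, while the disk side is cut into $n-1$ pants by the nested $\delta_{j,k}$'s. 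These two decompositions differ only on one curve, which handles case (a).

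For cases (b) and (c), I would assemble tailored pants decompositions by combining nested outer curves with outer curves coming from non-consecutive subsets of $\CC$. As a guiding example, to handle $\gamma_1=\alpha_j$ and $\gamma_2=\delta_{i,0}$ with $i\ne j$, one can exploit the outer curves from the non-consecutive set $\{\beta,\alpha_i,\alpha_j\}$; these bound a pair of nested disks inside $\delta_{i,0}$'s disk and are disjoint from both $\delta_{i,0}$ and $\alpha_j$. Together with the chain curve $\alpha_i$ they yield pants decompositions containing $\alpha_j$ and $\delta_{i,0}$ respectively that agree on every other curve. For general outer curves (including those from non-consecutive $A\subset\CC$, whose filled subsurfaces may have multiple boundary components), the strategy is the same: pick a bridging family of outer curves from subsets of $\CC$ that contain the $\alpha$-indices lying on each side of the configuration determined by $\gamma_1\cup\gamma_2$, and complete with chain curves as needed.

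The hard part will be the bookkeeping in cases (b) and (c) for general configurations: one must verify that each constructed set of outer and chain curves is pairwise disjoint and cuts $S$ into pairs of pants, a check that depends combinatorially on how the $\alpha$-indices appearing in the generating subsets of $\gamma_1$ and $\gamma_2$ interleave. The rotational and reflectional symmetries of $\CC$ will reduce the number of configurations to consider, but a careful subcase analysis seems unavoidable.
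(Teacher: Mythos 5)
Your reduction to the three cases is correct, and your case (a) is complete and is exactly the paper's construction: the nested family $\delta_{j,0}\subset\delta_{j,1}\subset\cdots\subset\delta_{j,n-2}$ is precisely the ``maximal collection of pairwise disjoint outer curves disjoint from $\gamma_1\cup\gamma_2$'' that the paper takes, and your pants count checks out. The issue is that cases (b) and (c) carry most of the weight of the lemma, and you explicitly defer the verification there; as written the proposal is not yet a proof of those cases, only a plausible plan plus one worked example. The paper closes them with a uniform recipe that largely dissolves the interleaving bookkeeping you are worried about: when at least one $\gamma_i$ is an outer curve, first throw in \emph{every} chain curve disjoint from $\gamma_1\cup\gamma_2$ (this includes $\beta$ in case (c), and all $\alpha_i$ not crossing the relevant punctured disks); after cutting $S$ along these chain curves and $\partial N(\gamma_1\cup\gamma_2)$, every remaining complementary piece is a sphere with holes in which a maximal nested family of outer curves --- the same one-parameter families you built for case (a), restricted to the punctures in that piece --- completes the pants decomposition. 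In other words, no case-by-case analysis of how the generating index sets interleave is needed: the only choice is ``all disjoint chain curves first, then a maximal disjoint collection of outer curves,'' and maximality does the rest. I would encourage you to replace your ``tailored'' constructions in (b) and (c) with this uniform one and verify it once; that turns your sketch into the paper's proof.
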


\begin{proof} We construct a pants decomposition $P$ of $S \setminus N(\gamma_1\cup \gamma_2)$ as follows; the reader should keep Figure \ref{F:torusdetect} in mind. 
If $\gamma_1,\gamma_2\in \CC$ we take a maximal collection of pairwise disjoint outer curves, each disjoint from $\gamma_1\cup\gamma_2$. If  $\gamma_1\in \OO$ we take a maximal collection of chain curves, each disjoint from $\gamma_1\cup \gamma_2$, plus a maximal collection of outer curves, each disjoint from $\gamma_1\cup\gamma_2$ and the chain curves just constructed. 

Now, we set $P_{\gamma_1}=P\cup \gamma_1$ and $P_{\gamma_2}=P\cup \gamma_2$, observing that $P_{\gamma_1}$ and $P_{\gamma_2}$ are pants decompositions of $S$, and that $P_{\gamma_1}, P_{\gamma_2} \subset \XX$.
\end{proof}

 \begin{figure}[htb]
\begin{center}
\includegraphics[width=4in, height=1.5in]{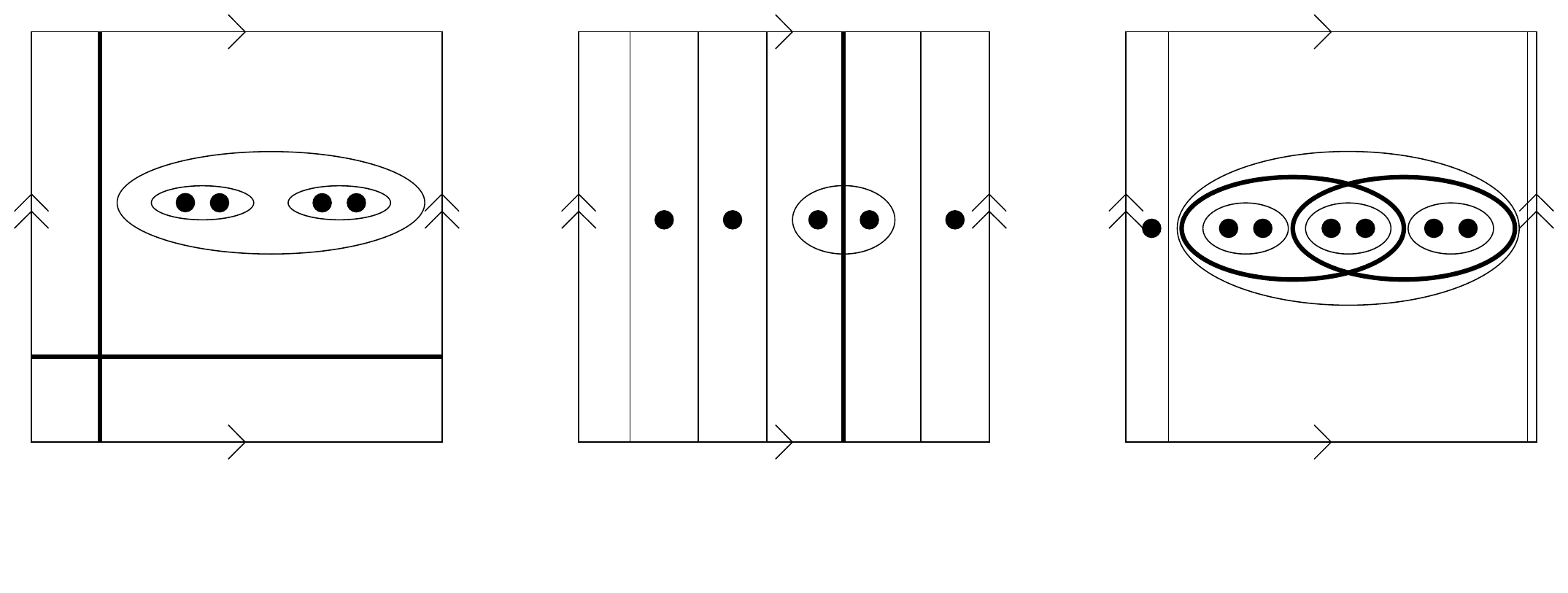} \caption{$\XX$-detectable intersections. Left: The case of two intersecting chain curves (in bold) on $S_{1,4}$. Middle: The case of a chain curve and an outer curve on $S_{1,5}$. Right: The case of two outer curves in $S_{1,7}$.} \label{F:torusdetect}
\end{center}
\end{figure}

\begin{proof}[Proof of Theorem \ref{T:tori}] As a first step, we claim:

\medskip

\noindent{\bf Claim.} For any $i=1, \ldots, n$, we have $i(\phi(\alpha_i),\phi(\beta))=1$. 

\smallskip 

\begin{proof} Let $\alpha_i, \beta \in \CC$, so that $i(\alpha_i, \beta)=1$ and $N_i = N(\alpha_i\cup\beta)$ is a one-holed torus.  Since $n\ge 3$, there is some other chain curve $\alpha_k \in \CC$ and an outer curve $\gamma \in \OO$ so that we can apply Lemma \ref{L:minimallyfilling} to $\gamma,\alpha_i,\beta,\alpha_k$, to conclude that $\phi(\alpha_i)$ and $\phi(\beta)$ are Farey neighbors or nearly Farey neighbors in $N(\phi(\alpha_i)\cup\phi(\beta))$, which we denote simply as $\phi(N_i)$.  It suffices to show that $\phi(N_i)$ is also a one-holed torus.

Let $P \subset \XX$ be the pants decomposition of $S \setminus N_i$ used in the proof of Lemma \ref{L:torusdetection}, which by construction consists entirely of outer curves (compare with Figure \ref{F:torusdetect}), and let $\gamma_0 = \partial N_i \in P$.  For every $\gamma \in P \setminus \gamma_0$, there is $\gamma' \in \OO$ such that $i(\gamma, \gamma') = 2$ and $i(\gamma',\alpha)=i(\gamma',\beta)=0$. By Lemma \ref{L:detectable2detectable}, $i(\phi(\gamma),\phi(\gamma'))\ne 0$, and since $\phi$ is simplicial $i(\phi(\alpha_i),\phi(\gamma') = i(\phi(\beta),\phi(\gamma')) = 0 = i(\phi(\gamma_0),\phi(\gamma'))$.  It follows that $\phi(N_i)$ has a unique boundary component $\phi(\gamma_0)$, and is therefore either a one-holed torus, or else a  sphere with four holes, three of which are punctures of $S$ with the fourth corresponding to $\phi(\gamma_0)$.

Seeking a contradiction, suppose that we are in the latter case.  As  $\phi(\alpha_i)$ and $\phi(\beta)$ are Farey neighbors or nearly Farey neighbors, they each bound a disk containing exactly two punctures of $S$; denote such disks by $D_i$ and $D_\beta$, respectively.  It follows that $\phi(N_j)$ is a four-holed sphere for {\em every} $j$, since it contains $\phi(\alpha_j)$ and $\phi(\beta)$ and $\phi(\beta)$ is separating.

Now observe that $D_i$ and $D_\beta$ have at lease one puncture in common, and since $n \geq 3$, two of $D_1,D_2,D_3$ must also have a puncture in common.  Without loss of generality, suppose it is $D_1,D_2$.  But then $D_1$ and $D_2$ nontrivially intersect, which means that $\phi(\alpha_1) \neq \phi(\alpha_2)$ must also nontrivially intersect.  This contradicts the fact that $\phi$ is simplicial and $i(\alpha_1,\alpha_2) = 0$.  Therefore the claim follows. \end{proof}

By the claim above, the curves $\phi(\alpha_1), \ldots, \phi(\alpha_n)$ are distinct, pairwise disjoint non-separating curves on $S$. Therefore, since $S$ has exactly $n$ punctures, $\phi(\alpha_i)$ and $\phi(\alpha_j)$ together bound an annulus with punctures for all $i\ne j$. Moreover,  since $n \geq 3$, for all $i=1, \ldots, n$ there exists a (unique) curve $\gamma\in \OO$ such that $i(\alpha_i, \gamma)\ne 0$, $ i(\alpha_{i+1}, \gamma)\ne 0$ and $i(\alpha_k, \gamma)=0$ for all $k\notin\{i, i+1\}$. By Lemma \ref{L:detectable2detectable} and \ref{L:toridetect}, $i(\phi(\alpha_i), \phi(\gamma))\ne 0$, $i(\phi(\alpha_{i+1}), \phi(\gamma))\ne 0$ and $i(\alpha_k, \gamma)=0$ for all $k\notin\{i, i+1\}$. It follows that $\phi(\alpha_i)$ and $\phi(\alpha_{i+1})$ together bound a once-punctured annulus. 
Therefore, as in Section \ref{S:spheres}, we can construct a unique orientation-preserving homeomorphism $h:S\to S$, which satisfies $\phi(\alpha)=h(\alpha)$ for all $\alpha \in \CC$. 

It remains to show that $\phi(\gamma)=h(\gamma)$ for all $\gamma\in \OO$. There is a natural partition $\OO=\bigcup_{k=2}^n\OO_k$ in terms of the number $k$ of punctures of $S$ contained in the disk bounded by the corresponding outer curve.
Every $\gamma \in \OO_2$ is uniquely determined by an almost filling subset $A\subset \CC$; namely, $A$  consists of $\beta$ and the $n-2$ elements of $\{\alpha_i\}_{i=1}^n$ that are disjoint from $\gamma$. Thus $\phi(\gamma) = h(\gamma)$  for all $\gamma\in \OO_2$. 

If $k\ge 3$, note that any element in $\delta \in \OO_k$ is uniquely determined by an almost filling set $B\subset \CC\cup\OO_2 \cup \ldots \cup  \OO_{k-1}$ consisting of all the curves disjoint from $\delta$.
Therefore, by induction on $k$, $\phi(\delta)=h(\delta)$ for all $\delta\in \OO_k$, and hence for every $\delta \in \OO$.   This concludes the proof of Theorem \ref{T:tori} 
\end{proof}

\section{Closed surfaces of higher genus}
\label{S:closed}

In this section we prove Theorem \ref{T:main} for closed surfaces of genus $g\ge 2$.  The proof is similar to the previous cases, but since the possible configurations of curves we need to consider is more complicated (and will become even more so in the next section), the notation becomes more involved.
Before we begin, we first note that if $S=S_{2,0}$, the result follows from Theorem \ref{T:sphere}, since $\C(S_{2,0})$ and $\C(S_{0,6})$ are isomorphic; see \cite{Luo}, for instance.  So we assume $S = S_{g,0}$ with $g \geq 3$; compare with Remark \ref{R:genus2same}.

Let $\CC=\{\alpha_0, \ldots\alpha_{2g+1}\}$ be a set of curves on $S$ such that $i(\alpha_i,\alpha_j)= 1$ if $\vert i-j \vert =1$ mod $2g+2$ and  
$i(\alpha_i,\alpha_j)= 0$ otherwise. See Figure \ref{F:genus4chain}.  We will refer to the elements of $\CC$ as {\em chain curves}, by analogy with the previous cases.  Indeed, taking a branched cover over the sphere with $2g+2$ marked points gives a closed surface of genus $g$, and the preimage of the chain curves on the sphere become chain curves on $S$.  
\begin{figure}[htb]
\begin{center}
\includegraphics[width=2.5in]{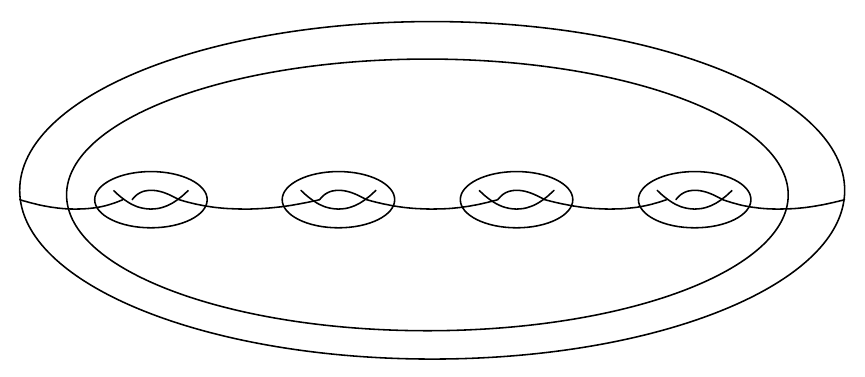} \caption{Chain curves $\CC$ on a genus $4$ surface.} \label{F:genus4chain}
\end{center}
\end{figure}

Given an interval $J \subset \{0,\ldots,2g+1 \}$ with respect to the {\em cyclic} order coming from reducing modulo $2g+2$, we set
\[ \alpha_J = \bigcup_{j \in J} \alpha_j.\]
For  $1 < |J| < 2g$, let $N(\alpha_J)$ denote the subsurface of $S$ filled by $\alpha_J$; equivalently, $N(\alpha_J)$ is the regular neighborhood of $\alpha_J$ in $S$.

When $|J|$ is even, $\partial N(\alpha_J)$ is a separating curve, denoted $\sigma_J$.  Let $\SS$ be the set of all such separating curves.   On the other hand, if $|J| < 2g-1$ is odd, then $\partial N(\alpha_J)$ is a bounding pair, denoted $\beta_J^+ \cup \beta_J^-$;  for $|J| = 2g-1$, $\partial N(\alpha_J)$ consists of two isotopic copies of $\alpha_k$, where $k = k(J)$ is the unique odd integer modulo $2g+2$ not in $J$.

We let $\BB$ denote the set of all such bounding pairs $\beta_J^\pm$ (thus {\em not} including the case $|J| = 2g-1$).
We partition $\BB$ as follows.  First, observe that the union of the even chain curves $\alpha_0 \cup \alpha_2 \cup \ldots \cup \alpha_{2g}$ cuts $S$ into $\Theta_e^+,\Theta_e^-$, each of which is homeomorphic to $S_{0,g+1}$; we will refer to these  the ``top'' and the ``bottom'' spheres.  See Figure \ref{F:topandbottom}.  Similarly, the union of the odd chain curves $\alpha_1 \cup \alpha_3 \cup \ldots \cup \alpha_{2g+1}$ cuts $S$ up into $\Theta_o^+,\Theta_o^-$, each also homeomorphic to $S_{0,g+1}$: the ``front'' and ``back'' spheres.   Each $\gamma \in \BB$ is contained in exactly one of these, and we accordingly partition $\BB = \BB^+_e \cup \BB^-_e \cup \BB^+_o \cup \BB^-_o$.  

\begin{figure}[htb]
\labellist
\small\hair 2pt
 \pinlabel {$\Theta_e^+$} [ ] at 123 79
 \pinlabel {$\Theta_e^-$} [ ] at 123 29
\endlabellist
\centering
\includegraphics[scale=.75]{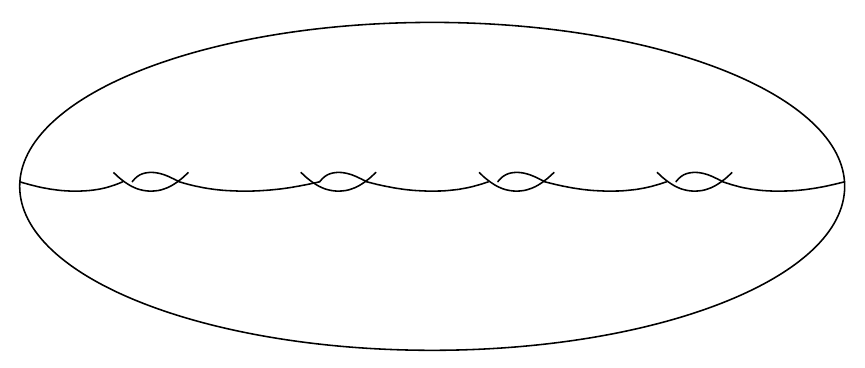}
\caption{The top and bottom spheres-with-$(g+1)$-holes, $\Theta_e^+,\Theta_e^-$.}
\label{F:topandbottom}
\end{figure}

Let $\XX_0 = \CC \cup \SS \cup \BB$. We need one more type of curve before defining our set $\XX$. Observe that for any interval $J$, $\partial N(\alpha_J)$ intersects $\alpha_j$ if and only if $j$ is an immediate predecessor or successor of $J$.  For concreteness, suppose $j$ is an immediate successor, so $J = \{i,i+1,\ldots, j-1 \}$.  When $|J| < 2g-1$ is odd, then
$\partial N(\alpha_J)$ consists of the bounding pair $\beta_J^\pm$, and $i(\alpha_j,\beta_J^+) = i(\alpha_j,\beta_J^-) = 1$. We let $\mu_{j,J}^\pm$ denote the boundary of the one-holed torus $N(\alpha_j \cup \beta_J^\pm)$, and let $\UU$ denote the set of all such curves $\mu_{j,J}^\pm$; compare Figure \ref{F:localclosed}.   We set:
\[ \XX = \XX_0 \cup \UU = \CC \cup \SS \cup \BB \cup \UU.\]

\begin{theorem} \label{T:closedsurfaces} For every locally injective simplicial map $\phi:\XX \to \C(S)$, there exists a unique mapping class $h \in \Mod^\pm(S)$ such that $h|_\XX = \phi$.
\end{theorem}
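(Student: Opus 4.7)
The strategy mirrors the proofs of Theorems \ref{T:sphere} and \ref{T:tori}: first, establish detectability of the relevant intersections in $\XX$; second, use Lemma \ref{L:minimallyfilling} together with auxiliary curves from $\UU$, $\SS$, and $\BB$ to pin down the topological behavior of $\phi$ on the chain $\CC$; third, build a candidate homeomorphism $h$ from the image of $\CC$; and fourth, verify $\phi = h|_\XX$ on $\SS \cup \BB \cup \UU$ by a sequence of almost-filling arguments.

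For the detectability step, for any two intersecting curves $\gamma_1,\gamma_2 \in \XX$ that play a role in the proof, I would build a pants decomposition $P \subset \XX$ of $S - N(\gamma_1 \cup \gamma_2)$ by maximally extending first in $\SS$, then in $\BB$, then in $\CC$---the argument parallels Lemma \ref{L:torusdetection}, just with more cases to check. Applying Lemma \ref{L:minimallyfilling} to $\alpha_{i-1},\alpha_i,\alpha_{i+1},\alpha_{i+2}$ then shows $\phi(\alpha_i)$ and $\phi(\alpha_{i+1})$ are Farey or nearly Farey neighbors. The nearly Farey case would force $N(\phi(\alpha_i)\cup\phi(\alpha_{i+1}))$ to be a four-holed sphere, and this is precisely what the curves in $\UU$ (together with the separating curve $\sigma_{\{i-1,i,i+1,i+2\}} \in \SS$) are designed to obstruct: a curve $\mu_{j,J}^\pm \in \UU$ chosen with $\alpha_i$ or $\alpha_{i+1}$ in the one-holed-torus region has a prescribed intersection pattern with $\CC$ and $\BB$ that, when transported by $\phi$, is inconsistent with a four-holed-sphere fill. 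Iterating along the chain yields that $\phi(\CC)$ has the same intersection pattern as $\CC$, and in particular that consecutive $\phi(\alpha_i),\phi(\alpha_{i+1})$ meet once and fill a one-holed torus.

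Since a regular neighborhood of $\CC$ in $S$ has complement consisting of two disks, the same must hold for $\phi(\CC)$, and the chain correspondence $\alpha_i \mapsto \phi(\alpha_i)$ therefore extends to a self-homeomorphism of $S$; composing with the orientation-reversing involution if necessary produces the candidate $h \in \Mod^\pm(S)$. Verification of $\phi = h$ on $\SS \cup \BB \cup \UU$ proceeds by induction on $|J|$: each $\sigma_J \in \SS$ and each $\beta_J^\pm \in \BB$ is uniquely determined among curves disjoint from an almost-filling set consisting of chain curves together with separating curves and bounding pairs from strictly shorter intervals. The $\pm$ ambiguity within each bounding pair is resolved by noting that the partition $\BB = \BB_e^+ \cup \BB_e^- \cup \BB_o^+ \cup \BB_o^-$ is visible from the chain data via the four spheres $\Theta_e^\pm,\Theta_o^\pm$ that are themselves determined by the even and odd chain curves. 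The curves $\mu_{j,J}^\pm \in \UU$ are then handled by the same method once $\CC \cup \SS \cup \BB$ is known to be preserved.

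The main obstacle will be the rule-out of nearly Farey neighbors in the second step, i.e.\ showing $N(\phi(\alpha_i)\cup\phi(\alpha_{i+1}))$ is a one-holed torus rather than a four-holed sphere. Because the chain curves are non-separating and no puncture-counting argument is available (unlike in Sections \ref{S:spheres} and \ref{S:tori}), subtler topological information is needed; it is precisely to supply this information that the set $\UU$ was included in $\XX$, and threading the intersection patterns of $\UU$-curves with $\BB$- and $\CC$-curves correctly through $\phi$ is where the bulk of the technical work will lie.
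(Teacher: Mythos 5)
Your overall architecture matches the paper's: establish $\XX$-detectability of chain-curve intersections via pants decompositions built from nested subsurfaces $N(\alpha_J)$, apply Lemma~\ref{L:minimallyfilling} to consecutive chain curves, build a candidate homeomorphism from the chain, and verify agreement on $\SS\cup\BB\cup\UU$ by almost-filling arguments. Your handling of the $\pm$-ambiguity in $\BB$ via the four spheres $\Theta_e^\pm,\Theta_o^\pm$ is also essentially right, though you should make explicit that what one shows is that $\phi$ sends each block of the partition $\BB_e^\pm,\BB_o^\pm$ to some block of the corresponding partition of $\phi(\BB)$ (using connectedness of each block together with $\XX$-detectability, since each block is a copy of the rigid set for $S_{0,g+1}$), and then the residual freedom $H_\CC\cong\mathbb Z/2\times\mathbb Z/2$ in $h_0$ is used to correct the matching.

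However, your analysis of how to rule out nearly Farey neighbors has the difficulty exactly backwards. You write that ``no puncture-counting argument is available (unlike in Sections~\ref{S:spheres} and~\ref{S:tori}), [so] subtler topological information is needed.'' In fact the absence of punctures is what makes this step \emph{easier} in the closed case. The real work, as in the torus case, is showing that $N(\phi(\alpha_i)\cup\phi(\alpha_{i+1}))$ has only one boundary component in $S$; the paper does this by proving that every curve $\gamma$ in the constructed pants decomposition $P$ other than $\sigma_{J_1}=\partial N(\alpha_i\cup\alpha_{i+1})$ has $\XX$-detectable intersection with some $\gamma'\in\XX$ disjoint from $N(\alpha_i\cup\alpha_{i+1})$, so that $\phi(\gamma)$ cannot be a boundary curve of $N(\phi(\alpha_i)\cup\phi(\alpha_{i+1}))$. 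This is where $\UU$ is used---the curves $\mu^\pm_{j,J}$ serve precisely to make $i(\beta_{J_k}^\pm,\alpha_{i+k+1})\neq 0$ detectable---not to produce an ``intersection pattern inconsistent with a four-holed-sphere fill.'' Once one knows there is a single boundary component, the fact that $S$ is closed immediately forces a one-holed torus: a four-holed sphere with one boundary curve would need three of its holes to be punctures of $S$, and there are none. So the puncture-free hypothesis is the punchline, not an obstacle. This is the conceptual gap you would need to repair to make the argument go through.
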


\begin{remark}
As will become apparent, the set $\UU$ will only be used to detect intersections between elements of $\XX_0$.
\end{remark}

\begin{proof}
We first describe a method for constructing pants decompositions from curves in $\XX_0$.   For any $i$, consider the increasing sequence of intervals $J_1 \subset J_2 \subset J_3 \subset \ldots \subset J_{2g-2}$, with $J_k = \{i,i+1,\ldots,i + k \}$, noting that $N(\alpha_{J_k})$ is obtained from $N(\alpha_{J_{k-1}})$ by adding a single one-handle to $\partial N(\alpha_{J_{k-1}})$.  As such, the union of all boundary curves
\[ \begin{array}{lll} P &  = & \partial N(\alpha_{J_1}) \cup \partial N(\alpha_{J_2}) \cup \cdots \cup \partial N(\alpha_{J_{2g-4}}) \cup \partial N(\alpha_{J_{2g-3}}) \cup \partial N(\alpha_{J_{2g-2}}) \\
 & = & \sigma_{J_1} \cup \beta_{J_2}^- \cup \beta_{J_2}^+ \cup \cdots \cup \beta_{J_{2g-4}}^- \cup \beta_{J_{2g-4}}^+ \cup \sigma_{J_{2g-3}} \cup \alpha_{i-2} \end{array} \]
is a pants decomposition of the complement of the one-holed torus $N(\alpha_{J_1}) = N(\alpha_{i}\cup\alpha_{i+1})$; here, $\partial N(\alpha_{J_{2g-2}})$ is a union of two copies of $\alpha_{i-2}$, and we take just one in $P$.

This immediately implies $i(\alpha_i,\alpha_{i+1}) \neq 0$ is $\XX$--detectable.  Consequently, applying this argument to four consecutive chain curves and appealing to Lemma \ref{L:minimallyfilling}, it follows that $\phi(\alpha_i),\phi(\alpha_{i+1})$ are Farey neighbors or nearly Farey neighbors.

Next, we want to show that $N(\phi(\alpha_i)\cup\phi(\alpha_{i+1}))$ is a one-holed torus, so that we may conclude that $i(\phi(\alpha_i),\phi(\alpha_{i+1})) = 1$.  Let $P$ be the pants decomposition of the complement of $N(\alpha_i\cup\alpha_{i+1})$ containing $\sigma_{J_1} = \partial N(\alpha_i\cup\alpha_{i+1})$ just constructed.
\begin{claim}
For all $\gamma \in P  \setminus \sigma_{J_1}$, there exists $\gamma' \in \XX$ such that $i(\gamma,\gamma') \neq 0$ is $\XX$--detectable and $\gamma'$ is disjoint from $N(\alpha_i\cup\alpha_{i+1})$.
\end{claim}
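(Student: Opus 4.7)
The claim is proven by case analysis on the three kinds of curves appearing in $P\setminus\sigma_{J_1}$: the chain curve $\alpha_{i-2}$, the bounding-pair components $\beta_{J_k}^\pm$ with $k$ even and $2\le k\le 2g-4$, and the separating curves $\sigma_{J_k}$ with $k$ odd and $3\le k\le 2g-3$. In each case I exhibit a specific $\gamma'\in\XX$; that $\gamma'$ is disjoint from $\alpha_i\cup\alpha_{i+1}$ will follow from cyclic-distance arithmetic (using $g\ge 3$), and that $\gamma$ and $\gamma'$ intersect is immediate from the chain-adjacency structure. The substantive content lies in establishing $\XX$-detectability by producing a pants decomposition of $S\setminus N(\gamma\cup\gamma')$ contained in $\XX$.

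For $\gamma=\alpha_{i-2}$ I take $\gamma'=\alpha_{i-3}$; the consecutive chain curves meet once, and detectability is immediate from the same nested-intervals pants decomposition used at the start of the proof to detect $i(\alpha_i,\alpha_{i+1})$, now re-based at $\alpha_{i-3},\alpha_{i-2}$. For $\gamma=\beta_{J_k}^\pm$ I take $\gamma'=\alpha_{i+k+1}$, which meets $\alpha_{i+k}\in J_k$ exactly once and hence crosses $\beta_{J_k}^\pm$ once. This pair fills the one-holed torus $N(\alpha_{i+k+1}\cup\beta_{J_k}^\pm)$ whose boundary is exactly $\mu:=\mu_{i+k+1,J_k}^\pm\in\UU$; a pants decomposition of the complementary genus-$(g-1)$ surface with boundary $\mu$ is then built inside $\XX$ by combining $\mu$ with $\sigma$ and $\beta$ curves from the nested-intervals construction based at a chain curve outside $J_k\cup\{i+k+1\}$, yielding both $P_\gamma$ and $P_{\gamma'}$.

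For $\gamma=\sigma_{J_k}$ I take $\gamma'=\alpha_{i+k+1}$ again; the intersection number is $2$. The key topological observation is that the inner arc $\eta_1=\alpha_{i+k+1}\cap N(\alpha_{J_k})$ is the cocore of the handle attached by $\alpha_{i+k}$, so cutting $N(\alpha_{J_k})$ along $\eta_1$ yields $N(\alpha_{J_{k-1}})$ with boundary the bounding pair $\beta_{J_{k-1}}^\pm$; symmetrically, the outer arc $\eta_2$ is the cocore of the handle attached by $\alpha_{i+k+2}$ in $S\setminus N(\alpha_{J_k})$, so cutting along $\eta_2$ yields $N(\alpha_{J'})$ where $J'=\{i+k+3,\ldots,i-2\}\subset\{0,\ldots,2g+1\}$ (cyclic), with boundary the bounding pair $\beta_{J'}^\pm$. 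Thus the four boundary curves of the $4$-holed sphere $N(\sigma_{J_k}\cup\alpha_{i+k+1})$ are $\beta_{J_{k-1}}^\pm$ and $\beta_{J'}^\pm$, all in $\BB\subset\XX$, and nested-intervals pants decompositions inside each of the two pieces complete the argument. The principal obstacle I foresee is the extremal case $k=2g-3$: then $J'$ collapses to the single element $\{i-2\}$, the outer piece degenerates to an annulus, and the two outer boundaries become isotopic on $S$ to $\alpha_{i-2}$; in this case $\sigma_{J_{2g-3}}$ and $\alpha_{i+k+1}$ actually fill a $\xi=2$ rather than a $\xi=1$ subsurface, so this choice of $\gamma'$ fails and must be replaced, presumably by a $\sigma$-curve or a $\mu$-curve from $\UU$ whose support straddles $\sigma_{J_{2g-3}}$ while avoiding $\alpha_i\cup\alpha_{i+1}$.
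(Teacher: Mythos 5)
Your choices of $\gamma'$ agree with the paper's in every case, and the method of exhibiting pants decompositions inside $\XX$ is the right one; the paper simply carries this out more explicitly by modifying $P$ directly, taking $P_{\sigma_{J_k}}=P\cup\alpha_i$ and $P_{\alpha_{i+k+1}}=(P\setminus\sigma_{J_k})\cup\alpha_{i+k+1}\cup\alpha_i$ for $\gamma=\sigma_{J_k}$, and $P_{\beta_{J_k}^+}=(P\setminus\beta_{J_k}^-)\cup\mu^+_{i+k+1,J_k}\cup\alpha_i$, $P_{\alpha_{i+k+1}}=P\setminus(\beta_{J_k}^-\cup\beta_{J_k}^+)\cup\mu^+_{i+k+1,J_k}\cup\alpha_{i+k+1}\cup\alpha_i$ for $\gamma=\beta_{J_k}^+$. (You also handle $\gamma=\alpha_{i-2}$ explicitly, which the paper leaves implicit.)

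The genuine gap in your write-up is the unresolved ``obstacle'' you raise at $k=2g-3$, which is in fact a misdiagnosis. The curves $\sigma_{J_{2g-3}}$ and $\alpha_{i+2g-2}$ do \emph{not} fill a $\xi=2$ subsurface: the chain curve $\alpha_{i-2}$ is essential in the two-holed torus you describe and is disjoint from both $\sigma_{J_{2g-3}}$ and $\alpha_{i+2g-2}$, so they cannot fill it. What they fill is the four-holed sphere whose boundary components are $\beta_{J_{2g-4}}^\pm$ together with two parallel copies of $\alpha_{i-2}$---a perfectly good essential $\xi=1$ subsurface, even though two of its boundary curves happen to be isotopic in $S$. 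Consequently the paper's pants decompositions remain valid verbatim at $k=2g-3$: one checks that $\alpha_{i+2g-2}$ is disjoint from every curve of $(P\setminus\sigma_{J_{2g-3}})\cup\alpha_i$ (its only chain-curve neighbors $\alpha_{i+2g-3}$ and $\alpha_{i+2g-1}$ appear in neither, and it misses $N(\alpha_{J_m})$ for all $m\le 2g-4$), so $P_{\alpha_{i+2g-2}}$ is a pants decomposition in $\XX$ and $\XX$-detectability follows directly from Definition \ref{D:detectable}. Detectability requires only the two pants decompositions; the fact that a detectable pair fills a $\xi=1$ subsurface is a consequence, not a precondition one must verify separately. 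So $\gamma'=\alpha_{i+2g-2}$ works here too, and the vague replacement you ``presumably'' need is both unnecessary and unjustified as written.
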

\begin{proof}  
We first observe that if $\gamma = \sigma_{J_k} \in P \setminus \{\sigma_{J_1} \}$, then $\sigma_{J_k},\alpha_{i+k+1}$ are Farey neighbors in the four-holed sphere $N(\sigma_{J_k}\cup\alpha_{i+k+1})$, with boundary components
\[ \partial N(\sigma_{J_k}\cup\alpha_{i+k+1}) = \beta_{J_{k-1}}^+ \cup \beta_{J_{k-1}}^- \cup \beta_{J_{k+1}}^+ \cup \beta_{J_{k+1}}^- . \]
See Figure \ref{F:localclosed}.  Therefore, 
\[ P_{\sigma_{J_k}} = P  \cup \alpha_i \, \, \mbox{ and } \, \,  P_{\alpha_{i+k+1}} = (P \setminus \sigma_{J_k}) \cup \alpha_{i+k+1} \cup \alpha_i  \]
are two pants decompositions proving that  $i(\sigma_{J_k},\alpha_{i+k+1}) \neq 0$ is $\XX$--detectable.  Since $i(\alpha_i,\alpha_{i+k+1}) = i(\alpha_{i+1},\alpha_{i+k+1}) = 0$, we have proven the claim for $\gamma = \sigma_{J_k}$, by setting $\gamma' = \alpha_{i+k+1}$.

\begin{figure}[htb]
\labellist
\small\hair 2pt
 \pinlabel {$\alpha_{i+k+1}$} [ ] at 97 61
 \pinlabel {$\sigma_{J_k}$} [ ] at 80 20
 \pinlabel {$\beta_{J_{k-1}}^+$} [ ] at 16 62
 \pinlabel {$\beta_{J_{k-1}}^-$} [ ] at 16 16
 \pinlabel {$\beta_{J_{k+1}}^+$} [ ] at 138 62
 \pinlabel {$\beta_{J_{k+1}}^-$} [ ] at 138 16
 \pinlabel {$\sigma_{J_{k-1}}$} [ ] at 222 63
 \pinlabel {$\mu_{i+k+1,J_k}^+$} [ ] at 278 8
 \pinlabel {$\beta_{J_k}^+$} [ ] at 267 68
 \pinlabel {$\alpha_{i+k+1}$} [ ] at 296 60
 \pinlabel {$\sigma_{J_{k+1}}$} [ ] at 343 65
\endlabellist
\centering
\includegraphics[scale=1.0]{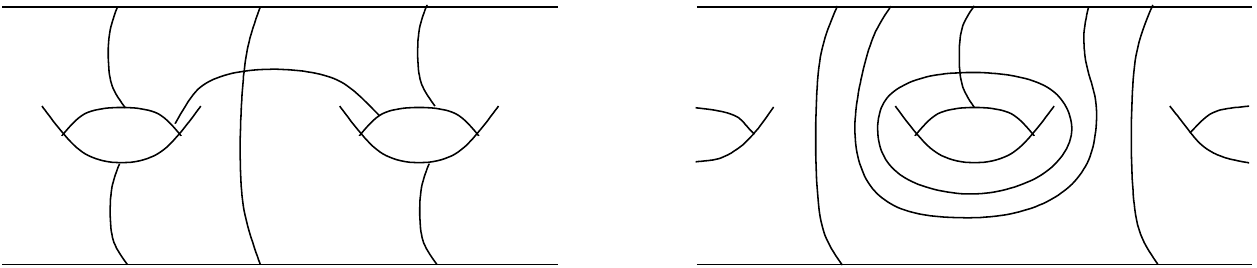}
\caption{$\XX$--detectability of $i(\gamma,\gamma') \neq 0$ for $\gamma' = \alpha_{i + k + 1}$ and $\gamma = \sigma_{J_k}$ (left) and $\gamma = \beta_{J_k}^+$ (right).}
\label{F:localclosed}
\end{figure}

For $\gamma = \beta_{J_k}^+$, we again let $\gamma' = \alpha_{i + k +1 }$ and consider the pants decompositions
\[ P_{\beta_{J_k}^+} =  (P \setminus \beta_{J_k}^- )\cup \mu^+_{i+k+1,J_k} \cup \alpha_i \]
 and 
\[ P_{\alpha_{i+k + 1}}  = P \setminus (\beta_{J_k}^- \cup \beta_{J_k}^+) \cup \mu_{i+k + 1,J_k}^+ \cup \alpha_{i + k + 1} \cup \alpha_i.\]
See Figure \ref{F:localclosed}.
Using these pants decompositions, we see that $i(\beta_{J_k}^+,\alpha_{i + k + 1}) \neq 0$ is $\XX$--detectable, while again $i(\alpha_i,\alpha_{i+k+1}) = i(\alpha_{i+1},\alpha_{i+k+1}) = 0$.  The case of $\gamma = \beta_{J_k}^-$ is proven in exactly the same way as $\beta_{J_k}^+$, and this completes the proof of the claim.
\end{proof}

As in Section \ref{S:tori}, it follows that, for all $i$, $N(\phi(\alpha_i)\cup\phi(\alpha_{i+1}))$ has only one boundary component in $S$. 
Since $S$ has no punctures, $N(\phi(\alpha_i)\cup\phi(\alpha_{i+1}))$ is a one-holed torus and thus $i(\phi(\alpha_i),\phi(\alpha_{i+1})) = 1$.

Since  $S$ has genus $g$, we deduce  that there is a homeomorphism $h_0:S \to S$ satisfying $h_0(\alpha) = \phi(\alpha)$ for all $\alpha \in \CC$. Moreover, $h_0$ is unique up to precomposing with elements in the point-wise stabilizer $H_\CC$ of $\CC$.

Next, each curve $\sigma_J \in \SS$ is uniquely determined by the almost filling set $\alpha_J \cup \alpha_{J'} \subset \CC$, where $J = \{i,\ldots,i+k \}$ and $J' = \{ i + k + 2,\ldots, i-2 \}$ (recall the indices are taken modulo $2g+2$).
Since $\phi(\sigma_J)$ is disjoint from all curves in $\phi(\alpha_J) \cup \phi(\alpha_{J'})$, then $\phi(\sigma_J)$ is also uniquely determined by $\phi(\alpha_J) \cup \phi(\alpha_{J'})$, and thus $\phi(\sigma_J) = h_0(\sigma_J)$, for all $\sigma_J \in \SS_J$. 

An analogous argument {\em almost} works for a bounding pair $\beta_J^\pm \in \BB$.  Here, $\{ \beta_J^+,\beta_J^- \}$ is similarly determined by $\alpha_J \cup \alpha_{J'}$.  As in the previous paragraph, we can conclude that $\{h_0(\beta_J^+),h_0(\beta_J^-) \} = \{ \phi(\beta_J^+),\phi(\beta_J^-) \}$.  However, it is not necessarily the case that $\phi(\beta_J^\pm) =h_0(\beta_J^\pm)$, since $H_\CC$ acts nontrivially on $\BB$.

We therefore wish to precompose $h_0$ with some element $f \in H_\CC$ so that $h_0 \circ f(\beta_J^\pm) = \phi(\beta_J^\pm)$.  To choose the appropriate element $f \in H_\CC$ we proceed as follows.
The $\phi$--images of the even (respectively, odd) chain curves cut $S$ into two complementary surfaces, each homeomorphic to $S_{0,g+1}$, which we denote $\Omega_e^\pm$ (respectively, $\Omega_o^\pm$). These determine partitions $\phi(\BB) = \phi(\BB)_e^+ \cup \phi(\BB)_e^- \cup \phi(\BB)_o^+ \cup \phi(\BB)_o^-$.  Moreover,
\[ \phi(\BB_e^+ \cup \BB_e^-) = \phi(\BB)_e^+ \cup \phi(\BB)_e^- \mbox{ and } \phi(\BB_o^+ \cup \BB_o^-) = \phi(\BB)_o^+ \cup \phi(\BB)_o^- \]
since, for example, being in $\BB_e^+ \cup \BB_e^-$ is determined by disjointness with the even chain curves, and $\phi$ is simplicial.
 
By inspection, any two curves in the same subset $\BB_e^+$, $\BB_e^-$, $\BB_o^+$, or $\BB_o^-$ are disjoint, or else there is a third curve in the same subset having nonzero intersection with both -- equivalently, the union of the curves in any subset of the partition forms a connected subset of $S$.  Now, the intersection between any two curves in $\BB_e^+$, say, is $\XX$--detectable; indeed, the subcomplex $\BB_e^+$ is isomorphic to the finite rigid set $\XX$  for $S_{0,g+1}$ constructed in Section \ref{S:spheres}, and similarly for the other four subsets of the partition of $\BB$. Therefore, $\phi$ sends each subset of the partition of $\BB$ to one of the subsets of the partition of $\phi(\BB)$.  

The group $H_\CC$ is generated by an involution $i_e:S \to S$ interchanging  $\Theta_e^+$ and $\Theta_e^-$, and an involution $i_o:S \to S$ interchanging $\Theta_o^+$ and $\Theta_o^-$, with $H_\CC \cong \mathbb Z/2 \mathbb Z \times \mathbb Z/2 \mathbb Z$.   So, precomposing $h_0$ with an appropriate element $f \in H_\CC$, we can assume $h = h_0 \circ f$ satisfies $h(\beta_J^\pm) = \phi(\beta_J^\pm)$ for all $\beta_J^\pm \in \BB$.

Now we know $h(\gamma) = \phi(\gamma)$ for all $\gamma \in \XX_0$.  Finally any $\mu_{j,J}^\pm \in \UU$  is uniquely determined by an almost filling subset of $\XX_0$, namely  $\beta_J^\pm \cup \alpha_j \cup P'$, where $P'$ is the pants decomposition of the complement of $N(\beta_J^\pm\cup\alpha_j)$.
Therefore, $h(\mu_{j,J}^\pm) = \phi(\mu_{j,J}^\pm)$ and $h|_\XX = \phi$ as required.
\end{proof}

\begin{remark} \label{R:genus2same}
The proof above is also valid for $S_{2,0}$.  However, in that case $\BB = \emptyset$ and $\XX = \CC \cup \SS$, so that $H_\CC = H_\XX$, and the proof ends after the construction of $h_0:S \to S$ and verification that $h_0(\sigma_J) = \phi(\sigma_J)$ for $\alpha \in \SS$.
\end{remark}

\section{Higher genus punctured surfaces.}

 Finally, we prove Theorem \ref{T:main} for $S$ a surface of genus $g\ge 2$ and with $n\ge 1$ punctures. Let $\alpha_0^0,\ldots,\alpha_0^n,\alpha_1,\ldots,\alpha_{2g+1}$ be curves on $S$ indexed as follows (see Figure \ref{F:puncturedex}). For all $i=0, \ldots, n-1$, the curves $\alpha_0^i,\alpha_0^{i+1}$ together bound a once-punctured annulus.
For all $1 \leq i,j \leq 2g+1$ we have \[ i(\alpha_i,\alpha_j) = \left\{ \begin{array}{ll} 1 & \mbox{ if } |i-j| = 1\\ 0 & \mbox{ otherwise; } \end{array} \right. \] Finally, for $1 \leq i \leq 2g+1$ and $0 \leq j \leq n$ we have
\[ i(\alpha_i,\alpha_0^j) = \left\{ \begin{array}{ll} 1 & \mbox{ if } i = 1 \mbox{ or } 2g+1 \\ 0 & \mbox{ otherwise. } \end{array} \right. \]
We set $\CC = \{\alpha_0^0,\ldots,\alpha_0^n,\alpha_1,\ldots,\alpha_{2g+1}\}$ and
$\CC_f = \{\alpha_0^0, \ldots,\alpha_0^n \}\subset \CC$.

\begin{figure}[htb]
\labellist
\small\hair 2pt
 \pinlabel {$\alpha_{2g+1}$} [ ] at 340 20
 \pinlabel {$\alpha_1$} [ ] at 96 -2
 \pinlabel {$\alpha_2$} [ ] at 114 5
 \pinlabel {$\alpha_3$} [ ] at 141 -1
 \pinlabel {$\alpha_0^4$} [ ] at 46 24
 \pinlabel {$\alpha_0^3$} [ ] at 12 43
 \pinlabel {$\alpha_0^2$} [ ] at -5 73
 \pinlabel {$\alpha_0^1$} [ ] at 15 120
 \pinlabel {$\alpha_0^0$} [ ] at 48 139
\endlabellist
\centering
\includegraphics[scale=.6]{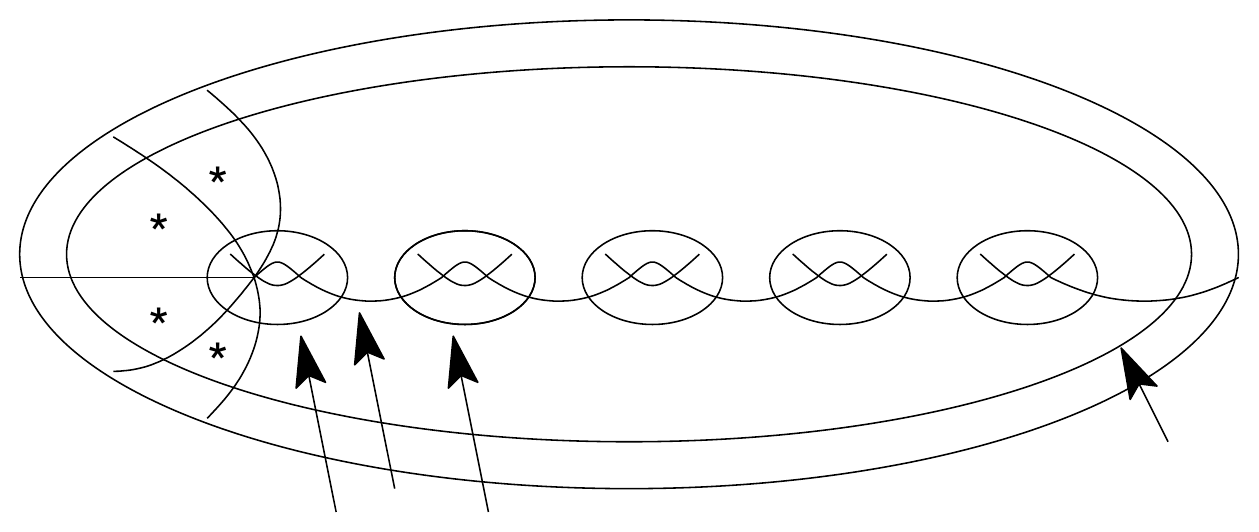}
\caption{ The set $\CC$ for $S_{5,4}$. }
\label{F:puncturedex}
\end{figure}

We will construct the finite rigid set $\XX$ using these curves as building blocks.  The proof that the resulting set is rigid will mimic both the proof for punctured tori and for closed surfaces.  We therefore define the rest of the curves in $\XX$ in a way that highlights this analogy.

\subsection{Torus comparison}

For each $0 \leq i \leq j \leq n$, consider the following essential subsurfaces:
\[ N_1^{i,j} = N(\alpha_0^i \cup \alpha_0^j \cup \alpha_1) \, , \quad N_2^{i,j} = N(\alpha_0^i \cup \alpha_0^j \cup \alpha_1 \cup \alpha_2) \]
\[ N_{2g+1}^{i,j} = N(\alpha_0^i \cup \alpha_0^j \cup \alpha_{2g+1}) \, , \quad N_{2g}^{i,j} = N(\alpha_0^i \cup \alpha_0^j \cup \alpha_{2g+1} \cup \alpha_{2g}) \]

The surface $N_1^{i,j}$ is two-holed torus if $j-i \geq 1$. If $j-i = 1$, then one of the holes is a puncture of $S$, while if $j-i > 1$, it is a boundary component which we denote $\epsilon^{i,j}_1$, which itself bounds a disk with $j-i$ punctures of $S$.
The other boundary component of $N_1^{i,j}$, denoted $\sigma^{i,j}_1$, bounds a torus with $j-i+1$ holes, where exactly $j-i$
of them correspond to punctures of $S$. Similarly, we let $\epsilon^{i,j}_{2g+1}$ and $\sigma^{i,j}_{2g+1}$ denote the boundary components of $N_{2g+1}^{i,j}$ (the former when $j-i > 1$), labeled in an analogous manner.  Finally, if $i = j$  then $N_1^{i,j}$ and  $N_{2g+1}^{i,j}$ are one-holed tori, whose boundary components we call $\sigma^{i,i}_1$ and $\sigma^{i,i}_{2g+1}$, respectively.

Observe that $\epsilon^{i,j}_1 = \epsilon^{i,j}_{2g+1}$ for all $i$ and $j$, and so we simply denote this $\epsilon^{i,j}$.
Define
 \[\OO = \{\epsilon^{i,j} \mid j-i \geq 2 \}.\]
The elements of $\OO$ are called {\em outer curves}, by way of comparison with Section \ref{S:tori}; again, there is a natural partition $\OO=\bigcup_{k=2}^n \OO_k$ in terms of  $k=j-i$.  We also define
\[ \SS_T = \{ \sigma^{i,j}_\ell \mid \ell = 1 \mbox{ or } 2g+1 \, , \, 0 \leq i \leq j \leq n \}.\]

\begin{remark} \label{R:comparetorus}
$S\setminus \sigma_1^{0,n}$ has two connected components, which we denote as  $S_L$ and $S_R$, where $S_L$ has genus 1 and contains all the punctures of $S$,  and $S_R$ has genus $g-1$. Observe that for $n \geq 2$ the set $\{\gamma\in \CC \cup \OO \cup \SS_T  \mid \gamma\subset S_L\}$ is exactly the finite rigid set for $S_L=S_{1,n+1}$ identified in Section \ref{S:tori}.  The chain curves from Section \ref{S:tori} correspond to elements of $\CC$ contained in $S_L$, while the outer curves from Section \ref{S:tori} correspond to curves in $\OO$ {\em and} $\SS_T$.
\end{remark}

Now, the surface $N_2^{i,j}$ is a three-holed torus (when $j-i > 0$), with one boundary component being $\epsilon^{i,j}$ (when $j - i > 1$). We denote the other two components $\beta_{0,1,2}^{i,j,+}$ and $\beta_{0,1,2}^{i,j,-}$, respectively, with the convention that in Figure \ref{F:puncturedex} $\beta_{0,1,2}^{i,j,+}$ would be on the ``top'' and $\beta_{0,1,2}^{i,j,-}$ would be on the ``bottom''.  This convention can also be described in terms of various intersection numbers (the case $g =2$ and $i=0$ and $j=n$ these are the same curve in $S$, namely $\alpha_4$, but we ignore this).
In fact, $\beta_{0,1,2}^{i,j,+}$ depends only on $i$, and not $j$, so we denote it $\beta_{0,1,2}^{i,+}$ when convenient.  Similarly, we write $\beta_{0,1,2}^{j,-}$ for $\beta_{0,1,2}^{i,j,-}$.

In an analogous fashion, we define $\beta_{2g,2g+1,0}^{i,+}$ and $\beta_{2g,2g+1,0}^{j,-}$ from $N_{2g}^{i,j}$.  We now set
\[ \BB_T^\pm = \{ \beta_J^{i,\pm} \mid J = \{0,1,2 \} \mbox{ or } \{2g, 2g+1,0 \} \, , \, 1 \leq i  \leq n \} \]
and $\BB_T = \BB_T^+ \cup \BB_T^-$.  These curves do not have obvious analogues in the case of punctured tori, but they will be used as a sort of ``bridge'' to the rest of the surface. 

\subsection{Closed surface comparison}

We define $\CC_0 = \{ \alpha_0^1,\alpha_1,\alpha_2,\ldots,\alpha_{2g+1} \}$; these curves will play much the same role as the chain curves in the closed case.  We write $\alpha_0 = \alpha_0^1$ to simplify the notation.  Let $J \subset  \{ 0,\ldots, 2g+1 \}$ be a proper interval in the cyclic order modulo $2g+2$ with $|J| \leq 2g$, and let $\alpha_J = \bigcup_{j \in J}\alpha_j$.

If $|J|$ is even, we write $\sigma_J = \partial N(\alpha_J) $, and define $\SS_0$ to be the set of all such separating curves $\sigma_J$. 
We note that $\SS_0 \cap \SS_T\neq \emptyset$, as $\sigma_{\{0,1\}} = \sigma^{1,1}_1$ and $\sigma_{\{2g+1,0\}} = \sigma^{1,1}_{2g+1}$.  When $n = 1$, we do not consider the case $|J| = 2g$ as then $\alpha_J$ fills $S$ (so $\alpha_J$ would be empty).

Next, let $\BB_0$ be the set of bounding pairs which occur as a boundary component of  subsurfaces $N(\alpha_J)$, where $|J| < 2g$ is odd and $J$ starts and ends with an even number.  As in the closed case, the even curves of $\CC_0$ divide $S$ into the {\em top sphere}, denoted $\Theta_e^+$, and the {\em bottom sphere}, denoted $\Theta_e^-$.  However, in this case $\Theta_e^+$ has $g+2$ holes, one of which is a puncture of $S$,  whereas $\Theta_e^-$ has $g+n$ holes, $n-1$ of which are punctures of $S$.  For each such $J$, write $\beta_J^\pm$ for the two boundary components of $N(\alpha_J)$, with $\beta_J^+ \subset \Theta_e^+$ and $\beta_J^- \subset \Theta_e^-$.  The set $\BB_0$ is then naturally partitioned as $\BB_0 = \BB_0^+ \cup \BB_0^-$; again, we note that $\BB_0 \cap \BB_T \neq \emptyset$.  Note that unlike the closed case, even for $|J|= 2g-1$ we get two distinct curves $\beta_J^\pm$ (which cobound an $n$--punctured annulus).

Also as in the closed case, for $J$ as in the previous paragraph, we consider the immediate successor or predecessor  $j$ to $J$ (so that $\{j \} \cup J$ is an interval with $| \{ j \} \cup J | = |J| + 1$).   Then $i(\alpha_j,\beta_J^+) = i(\alpha_j,\beta_J^-) = 1$, and we set $\mu_{j,J}^\pm$ to denote the boundary of the subsurface filled by $\alpha_j$ and $\beta_J^\pm$.   The union of the $\mu_{j,J}^\pm$ will be denoted $\UU$.

Now define 
\[ \XX = \CC \cup \OO \cup \SS_T \cup \SS_0 \cup \BB_T \cup \BB_0 \cup \UU. \]

\begin{theorem}
\label{T:punctured}
For any locally injective simplicial map $\phi:\XX \to \C(S)$, there exists a unique $h \in \Mod^\pm(S)$ such that $\phi = h|_\XX$.
\end{theorem}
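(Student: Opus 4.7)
The plan is to combine the strategies of Theorems \ref{T:tori} and \ref{T:closedsurfaces}. First I will establish $\XX$-detectability of the critical intersections in two regimes: (a) intersections between consecutive chain curves $\alpha_i,\alpha_{i+1}$ in $\CC_0$, using pants decompositions built from $\SS_0 \cup \BB_0 \cup \UU$ in essentially the same way as the closed-surface proof; and (b) intersections between consecutive $\alpha_0^i,\alpha_0^{i+1}$ in $\CC_f$ and between the end curves $\alpha_0^0,\alpha_0^n$ and $\alpha_1,\alpha_{2g+1}$, using pants decompositions built from $\OO \cup \SS_T \cup \BB_T$ as in the punctured-torus proof. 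Remark \ref{R:comparetorus} ensures that the subcomplex of $\XX$ contained in $S_L$ is literally the rigid set for $S_{1,n+1}$ from Section \ref{S:tori}, so that machinery applies verbatim there.

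Next, by Lemma \ref{L:minimallyfilling}, every consecutive pair of chain curves has $\phi$-images that are Farey or nearly Farey neighbors. For each such pair, I will run a boundary-counting argument mimicking the claims inside the proofs of Theorems \ref{T:tori} and \ref{T:closedsurfaces}: for every curve $\gamma$ in the pants decomposition of the complement of $N(\alpha_i\cup\alpha_{i+1})$ other than $\partial N(\alpha_i\cup\alpha_{i+1})$, I will exhibit $\gamma' \in \XX$ disjoint from $\alpha_i\cup\alpha_{i+1}$ with $i(\gamma,\gamma')\neq 0$ that is $\XX$-detectable. This forces $N(\phi(\alpha_i)\cup\phi(\alpha_{i+1}))$ to have the same number of boundary components and contained punctures as $N(\alpha_i\cup\alpha_{i+1})$, so that for $1\le i\le 2g$ the filled subsurface is a one-holed torus and $i(\phi(\alpha_i),\phi(\alpha_{i+1}))=1$, while $\phi(\alpha_0^i),\phi(\alpha_0^{i+1})$ cobound a once-punctured annulus, and $\phi(\alpha_0^0),\phi(\alpha_1)$, $\phi(\alpha_0^n),\phi(\alpha_{2g+1})$ have intersection one. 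These constraints together specify the combinatorial configuration of $\phi(\CC)$ on $S$ up to a homeomorphism, yielding $h_0\in\Homeo(S)$ with $h_0|_\CC=\phi|_\CC$, unique up to the point-wise stabilizer $H_\CC$.

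With $h_0$ in hand, each $\sigma_J\in\SS_0$, each $\sigma^{i,j}_\ell\in\SS_T$, and each outer curve $\epsilon^{i,j}\in\OO_k$ is uniquely determined by an almost-filling subset of $\CC$ (for $\OO$ by induction on $k$, exactly as in Section \ref{S:tori}), so $\phi$ agrees with $h_0$ on all these curves. For the bounding pairs, the unordered pair $\{\beta_J^+,\beta_J^-\}$ is determined by disjointness with chain curves, but the $\pm$-labeling is not; the partitions $\BB_0=\BB_0^+\cup\BB_0^-$ and $\BB_T=\BB_T^+\cup\BB_T^-$ are detected by simplicial disjointness from the even (resp.\ odd) chain curves and from the $\alpha_0^j$, hence preserved by $\phi$, and within each class the curves are arranged as a rigid sphere-complex configuration (Section \ref{S:spheres}). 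Using the $\mathbb{Z}/2\times\mathbb{Z}/2$ structure of $H_\CC$ generated by the two involutions swapping $\Theta_e^\pm$ and $\Theta_o^\pm$, I can precompose $h_0$ by some $f\in H_\CC$ so that $h=h_0\circ f$ matches $\phi$ on all of $\BB_0\cup\BB_T$. Finally, every $\mu_{j,J}^\pm\in\UU$ is uniquely determined by an almost-filling subset of $\XX_0=\XX\setminus\UU$ (namely $\beta_J^\pm\cup\alpha_j$ together with a pants decomposition of the complement of $N(\alpha_j\cup\beta_J^\pm)$), so $h(\mu_{j,J}^\pm)=\phi(\mu_{j,J}^\pm)$ automatically.

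The main obstacle will be the boundary-counting step for the transition curves $\alpha_1$ and $\alpha_{2g+1}$, where the chain $\CC_0$ meets the punctured side $\CC_f$. There, purely closed-surface detectability arguments fail (no separating curve of $\SS_0$ sits on the punctured side), so the bridging curves $\BB_T$ must be used to certify detectability and to witness the correct topological type of $N(\phi(\alpha_1)\cup\phi(\alpha_2))$ and $N(\phi(\alpha_{2g+1})\cup\phi(\alpha_{2g}))$. Matching the detectability data across this transition, while simultaneously ensuring the candidate homeomorphism respects both the closed-surface and punctured-torus symmetries before reducing by $H_\CC$, is the delicate point.
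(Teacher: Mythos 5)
Your overall strategy is close to the paper's, and you have correctly identified the key structural ideas: two-regime detectability ($\CC_0$ vs.\ $\CC_f$), Remark \ref{R:comparetorus} as the link to Section \ref{S:tori}, Lemma \ref{L:minimallyfilling} plus a boundary-counting step to pin down topological type, construction of a candidate homeomorphism, and then resolution of the remaining curves by almost-filling sets. The identification of the transition curves $\alpha_1,\alpha_{2g+1}$ and the role of $\BB_T$ as a bridge is also right, and agrees with the paper's treatment.

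However, the step where you ``precompose $h_0$ by some $f\in H_\CC$'' to fix the $\pm$-labels on $\BB_0\cup\BB_T$ is a genuine gap. The $\mathbb{Z}/2\times\mathbb{Z}/2$ structure of $H_\CC$ from the closed case does \emph{not} persist when $n\geq 1$: the involutions $i_e$, $i_o$ exist in $\Mod^\pm(S_{g,0})$ precisely because $\Theta_e^\pm$ (resp.\ $\Theta_o^\pm$) are interchangeable, but in the punctured case $\Theta_o^+$ contains all $n$ punctures while $\Theta_o^-$ contains none, and $\Theta_e^+$ contains $1$ while $\Theta_e^-$ contains $n-1$, so no mapping class swaps them (and the fan $\CC_f$ breaks what symmetry remains even when $n=2$). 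The only remaining freedom in the construction of $h$ is a single orientation-reversing swap of $\Theta_o^\pm$ at the level of homeomorphisms of the regular neighborhood of $\CC$, and this is fixed (forced, not chosen) by requiring $h$ to send the punctured side to the punctured side, which the paper identifies using the outer curves. Once $h$ is so extended there is no residual $H_\CC$-freedom, and so nothing to precompose by. The correct resolution of the $\pm$-labels is therefore a verification, not a normalization: the paper shows directly that $\beta_J^{i,j,+}$ and $\beta_J^{i,j,-}$ are distinguished by which curves of $\CC_f$ they meet, that these intersections are $\XX$-detectable, and then propagates the $\pm$-label across all of $\BB_0$ using the fact that the union $\bigcup\{\beta_J^+\}$ is connected via chains of $\XX$-detectable intersections, with $\beta_{\{0,1,2\}}^+\in\BB_T\cap\BB_0$ serving as the anchor. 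Your proposal omits these detectability arguments, which are the actual content of the step; without them, and without the $H_\CC$-freedom you assume, the matching on $\BB_T\cup\BB_0$ does not follow.

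A smaller omission: in ruling out the four-holed-sphere possibility for $N(\phi(\alpha)\cup\phi(\alpha'))$, the paper uses the specific counting contradiction that every curve in $\phi(\CC)$ would then bound a disk with exactly two punctures, which is impossible since $\CC$ contains $g+n$ pairwise-disjoint curves but $S$ has only $n$ punctures; your ``boundary-counting argument mimicking the claims'' gestures at this but does not name the contradiction. Similarly, the almost-filling sets that determine $\SS_0$, $\SS_T$ and the bounding pairs are subsets of $\CC\cup\OO$, not of $\CC$ alone.
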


\begin{proof}
We start by proving that for all $\alpha,\alpha' \in \CC$, with $i(\alpha,\alpha') = 1$ we have $i(\phi(\alpha), \phi(\alpha')) = 1$.   The first step is to prove:

\begin{claim}
Let  $\alpha,\alpha' \in \CC$ with $i(\alpha,\alpha') = 1$. Then the intersection of $\alpha$ and $\alpha'$ is $\XX$-detectable. 
\end{claim}

\begin{proof} [Proof of claim.]  Before starting, note that for any $\epsilon^{i,j}\in \OO$, the multicurve $\epsilon^{i+1,j} \cup \ldots \cup \epsilon^{j-2,j}$ is a pants decomposition of the punctured disk bounded by $\epsilon^{i,j}$. 

Now, there are two cases to consider.

\bigskip

\noindent
{\bf Case 1.} Neither $\alpha$ nor $\alpha'$ is in $\CC_f$.\smallskip

\noindent

Let $\alpha= \alpha_j$ and $\alpha'=\alpha_{j+1}$, and first assume that $j$ is even (and thus $j \neq 0$).   Then let
$ J_1 \subset J_2 \subset \ldots \subset J_{2g}$
be an increasing sequence of intervals, with $J_k = \{j,j+1, \ldots, j+k \}$ and indices taken modulo $2g+2$.
For $n \geq 3$, this determines a pants decomposition of $S \setminus N(\alpha_j \cup \alpha_{j+1})$:
\[ P = \sigma_{J_1} \cup \beta^-_{J_2} \cup \beta^+_{J_2} \cup \sigma_{J_3} \cup \ldots \cup \beta_{J_{2g-2}}^- \cup \beta_{2g-2}^+ \cup \sigma_{J_{2g-1}} \cup \epsilon^{1,n} \cup \ldots \cup \epsilon^{n-2,n}. \]
This is similar to Section \ref{S:closed}, except we must add outer curves at the end.  For $n \leq 2$, we have fewer curves.  Specifically, for $n = 2$, there are no outer curves so $P$ ends with $\sigma_{J_{2g-1}}$.  For $n = 1$, $P$ ends with $\beta_{J_{2g-2}}^- \cup \beta_{J_{2g-2}}^+$.  For simplicity of notation, we ignore these cases as the arguments are actually simpler there.  Setting $P_\alpha= P\cup \alpha$ and $P_{\alpha'}=P\cup \alpha'$, it follows that the intersection between $\alpha,\alpha'$ is $\XX$--detectable.

When $j$ is odd, we can apply the same argument, this time with the intervals $J_k$  having the form $J_k = \{j+1,j,j-1,\ldots,j-k+1 \}$. This completes the proof of Case 1.

\bigskip

\noindent
{\bf Case 2.} One of $\alpha$ or $\alpha'$ is in $\CC_f$.\smallskip

\noindent
Without loss of generality, assume $\alpha = \alpha_0^i$ and $\alpha' = \alpha_1$, as the case $\alpha' = \alpha_{2g+1}$ is analogous.  We have $\sigma_1^{0,n} = \sigma_J$ for $J =\{  3, \ldots,2g \}$.  Write $S\setminus \sigma_1^{0,n}=S_L \cup S_R$ as above. Using the construction described in Case 1 (and in Section \ref{S:closed}) for the increasing sequence of intervals $J_1 \subset \ldots \subset J_{2g-3} = J$, where $J_k = \{2g,2g-1, \ldots, 2g-k\}$, we obtain a pants decomposition of $S_R$. Similarly, we argue as in Section \ref{S:tori} to construct a pants decomposition of $S_L \setminus N(\alpha_0^i \cup \alpha_1)$ (see also Remark \ref{R:comparetorus} above). In this way we have constructed a pants decomposition, consisting entirely of elements of $\XX$,  of the complement of  $N(\alpha_0^i\cup\alpha_1)$; see  Figure \ref{F:puncturedint}. As above, this shows that the intersection of $\alpha_0^i$ and $\alpha_1$ is $\XX$--detectable.
\end{proof}

\begin{figure}[htb]
\begin{center}
\includegraphics[width=2.5in]{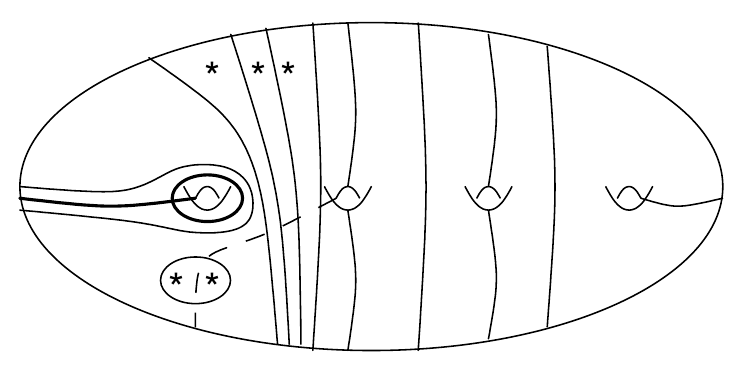} \caption{A pants decomposition $P$ of the complement of $\alpha_0^4 \cup \alpha_1$ in $S_{4,5}$ (in solid). The dashed curve $\beta_{0,1,2}^{4,-}$ is used to detect intersections with some of the curves in $P$.} \label{F:puncturedint}
\end{center}
\end{figure}

The above claim, together with Lemma \ref{L:minimallyfilling} applied to the appropriate four chain curves,  yield that $\phi(\alpha)$ and $\phi(\alpha')$ are Farey neighbors or nearly Farey neighbors.

\begin{claim}
Given $\alpha,\alpha' \in \CC$ with $i(\alpha,\alpha') = 1$, let $P_\alpha,P_{\alpha'}$ be the pants decompositions constructed in the proof of the claim above.  Then for every  $\gamma \in P_\alpha - (\alpha \cup \partial N(\alpha \cup \alpha'))$, there is $\gamma'\in\XX$ with $i(\gamma,\gamma') \neq 0$ $\XX$--detectable, and $i(\alpha,\gamma') = i(\alpha',\gamma') = 0$. 
\end{claim}
\begin{proof}[Proof of claim.]
Before starting we remark that, as in Section \ref{S:tori}, the intersection of any two outer curves is $\XX$--detectable; see Figure \ref{F:outerdetect}. 

As in the previous claim, there are two cases to consider.\\
\begin{figure}[htb]
\begin{center}
\includegraphics[width=2.5in]{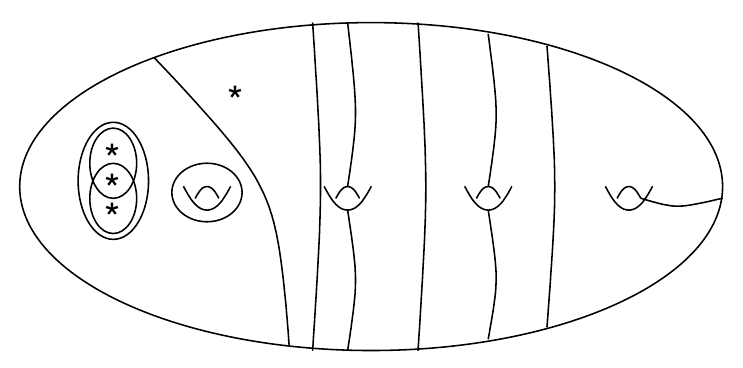} \caption{Detecting the intersection of two outer curves in  $S_{4,4}$ } \label{F:outerdetect}
\end{center}
\end{figure}

\noindent
{\bf Case 1.}  Neither $\alpha$ nor $\alpha'$ is in $\CC_f$.\smallskip

Assume first that $\alpha = \alpha_j$ and $\alpha'  = \alpha_{j+1}$ with $j$ even.  As in Section \ref{S:closed},  if $k\ge 3$ is odd, then $\sigma_{J_k}$ and $\alpha_{j+k + 1}$ have $\XX$-detectable intersection; similarly, the intersection of $\beta_{J_k}^\pm$ and $\alpha_{j+k+1}$ is $\XX$-detectable for $k$ even. 
The remaining curves in $P_\alpha \setminus \alpha$ are outer curves of the form $\epsilon^{i,n}$ with $i > 1$; note that $\epsilon^{0,i+1}$ has $\XX$-detectable intersection with $\epsilon^{i,n}$, and that $\epsilon^{0,i+1}$ is disjoint from $\alpha_j \cup \alpha_{j+1}$, as required.  For odd $j$, we argue similarly.\\

\noindent
{\bf Case 2.} One of $\alpha$ or $\alpha'$ is in $\CC_f$.\smallskip

Again without loss of generality, assume $\alpha = \alpha_0^i$ and $\alpha' = \alpha_1$, as the case $\alpha' = \alpha_{2g+1}$ is similar. Let $\gamma \in P_\alpha - (\alpha \cup \partial N(\alpha\cup \alpha'))$. Consider the separating curve $\sigma_1^{0,n}\in P_\alpha$, and write $S\setminus \sigma_1^{0,n} = S_L \cup S_R$ as above.
If $\gamma\subset S_L$, then the existence of the curve $\gamma'$ follows as in Section \ref{S:tori}. If $\gamma\subset S_R$ then we can use Case 1 to find the desired curve $\gamma'$. Finally, if $\gamma=\sigma_1^{0,n}$, we choose $\gamma'$ to be a curve in $\BB_T$; see the dashed curve in Figure \ref{F:puncturedint}. Observe that the intersection of $\sigma_1^{0,n}$ and $\gamma'$ is $\XX$-detectable.

\end{proof} 

From the second claim, it follows that for all $\alpha,\alpha' \in \CC$ with $i(\alpha,\alpha') = 1$, $N = N(\phi(\alpha)\cup\phi(\alpha'))$ has only one boundary component (that is, only one of the holes is a boundary component).  Suppose, for contradiction, that $N$ were homeomorphic to a sphere with four holes, three of which correspond to punctures of $S$ (compare Section \ref{S:tori}). 
In particular, each of $\phi(\alpha)$ and $\phi(\alpha')$ bound a disk containing exactly two punctures of $S$.  Now, any  $\alpha'' \in \CC$ can be connected to $\alpha$ by a sequence of curves in $\CC$ such that any two consecutive elements intersect once. By considering the $\phi$-image of the elements of such sequence, we obtain a sequence of curves connecting  $\phi(\alpha'')$ and $\phi(\alpha)$ such that any two consecutive elements are Farey neighbors or nearly Farey neighbors. Therefore, every element of $\phi(\CC)$ also bounds a disk containing exactly two punctures of $S$. However, this is impossible, since $S$ has $n$ punctures and $\CC$ contains $g+n$ pairwise disjoint curves. This contradiction proves  $i(\phi(\alpha),\phi(\alpha'))=1$, as desired.

It follows that there is a homeomorphism $h$ from a neighborhood of the curves in $\CC$ to a neighborhood of the curves in $\phi(\CC)$, with $h(\alpha_i) = \phi(\alpha_i)$ for all $i \neq 0$, and $\phi(\CC_f) = h(\CC_f)$.  Arguing as in Section \ref{S:tori}, we may use the curves in $\OO$ and $\SS_T$ to deduce that $\phi$ preserves the cyclic order of the curves $\CC_f$ (observe that the intersections between $\CC_f$ and $\OO$ are $\XX$--detectable, as are intersections between $\CC_f$ and the curves $\sigma_1^{i,j} \in \SS_T$).
Moreover, using $\XX$--detectable intersections between curves in $\OO$ and $\BB_T$ (e.g. the dashed curve in Figure \ref{F:puncturedint}), it follows that the ``extremal''  curves $\alpha_0^0$ and $\alpha_0^n$ are sent by $\phi$ to the extremal curves of $\phi(\CC_f)$.  Therefore, we may assume $h(\alpha_0^i) = \phi(\alpha_0^i)$ for all $i$.

Since $\phi$ is locally injective, it follows that $\phi(\alpha_0^i)$ and $\phi(\alpha_0^{i+1})$ bound a once punctured annulus for all $i = 0,\ldots,n-1$.  However, the boundary of the regular neighborhood of $\phi(\CC)$ has two components between $\phi(\alpha_0^i)$ and $\phi(\alpha_0^{i+1})$---one of them bounds a once-punctured disk, while the other bounds an unpunctured disk.   The same is true of the regular neighborhood of $\CC$.  We now explain how to choose $h$ so that the  boundary components of the neighborhood of $\CC$ bounding punctured disks are sent to boundary components bounding punctured disks, and hence $h$ can be extended to all of $S$.

Let $\Theta_o^+$ and $\Theta_o^-$ be the two connected components of the complement of the odd chain curves; up to relabeling, $\Theta_o^+$ is a sphere with $g+n+ 1$ holes, $n$ of which are punctures of $S$, whereas $\Theta_o^-$ is a sphere with $g+1$ holes. Note that up to isotopy of the neighborhood of $\CC$, $h$ is only well-defined up to an orientation-reversing involution swapping $\Theta_o^+$ and $\Theta_o^-$.   Denote by $\phi(\Theta)_o^\pm$ the two connected components of the $\phi$-image of the odd chain curves, noting that each of $\phi(\Theta)_o^\pm$ is a sphere with holes, some (possibly none) of which are punctures of $S$.  
However, since the union of the outer curves minus $\epsilon^{0,n}$ is connected, and the intersection of any two of them is $\XX$-detectable, and since these are all disjoint from the odd chain curves, it follows that every puncture of $S$ is contained in $\phi(\Theta)_o^+$, say.   

Therefore, precomposing with the homeomorphism swapping $\Theta_o^+$ and $\Theta_o^-$ if necessary (as we may), we may assume that $h$ sends the intersection of the neighborhood of $\CC$ with $\Theta_o^+$ into $\phi(\Theta)_o^+$.    It follows that $h$ extends to a homeomorphism $h:S \to S$ with $h(\alpha) = \phi(\alpha)$ for all $\alpha \in \CC$.  As in previous sections, it remains to show that $h$ agrees with $\phi$ on the rest of the elements of $\XX$. 

First, observe that $\gamma \in \OO_2$ is uniquely determined by an almost filling set of the form $A= \{\alpha_0^0,\ldots,\alpha_0^i, \alpha_0^{i+2},\ldots,\alpha_0^n, \alpha_1, \ldots, \alpha_{2g+1}\}\subset \CC$, and therefore $h(\gamma)=\phi(\gamma)$.  As in Section \ref{S:tori}, if $k\ge 3$ then every element of $\OO_k$ is uniquely determined by an almost filling set whose elements belong to $\CC \cup \OO_2 \cup \ldots \cup \OO_{k-1}$. It follows that $h|_\OO = \phi |_\OO$.   Similarly, any $\sigma \in \SS_0\cup\SS_T$ is uniquely determined by an almost filling set in $\CC \cup \OO$; compare with Section \ref{S:closed}. Therefore, we also have $h |_{\SS_0 \cup \SS_T} = \phi |_{\SS_0 \cup \SS_T}$.

Now,  a bounding pair $\beta_J^{i,j,+},\beta_J^{i,j,-}$ in $\BB_T$ is uniquely determined by the almost filling set $A=\{\alpha \in \CC \cup \OO | i(\alpha, \gamma)=0\} \subset \CC\cup \OO$, and therefore $\{h(\beta_J^{i,j,+}),h(\beta_J^{i,j,-})\} = \{\phi(\beta_J^{i,j,+}),\phi(\beta_J^{i,j,-}) \}$. Since $\beta_J^{i,j,+}$ can be distinguished from $\beta_J^{i,j,-}$ by the curves in $\CC_f$ it intersects, and since such intersections are $\XX$-detectable, it follows that $h(\beta_J^{i,j,+}) = \phi(\beta_J^{i,j,+})$ and $h(\beta_J^{i,j,-}) = \phi(\beta_J^{i,j,-})$. Thus $h|_{\BB_T} = \phi |_{ \BB_T}$. 

Similarly, a bounding pair $\beta_J^+,\beta_J^-$ in $\BB_0$ is  uniquely determined by an almost filling subset of  $\CC\cup \OO$, so that $\{ \phi(\beta_J^+),\phi(\beta_J^-)\} = \{h(\beta_J^+),h(\beta_J^-)\}$ and it again suffices to check that $h(\beta_J^+) =\phi(\beta_J^+)$.   Let  $\phi(\Theta)_e^\pm$ denote the two components of the complement of the even elements of $\phi(\CC_0)$, labeled such that $\phi(\alpha_0^0)\in \phi(\Theta)_e^+$.  Similar to the closed case, one can check that if $\phi(\beta_J^+) \subset \phi(\Theta)_e^+$ for some $J$, then this holds for all $J$ (one may verify that the union of $\{\beta_J^+\}$ is connected, with any two connected by a chain of $\XX$--detectable intersections).  Since $\beta_{1,2}^{1,+} = \beta_{\{0,1,2 \}}^+ \in \BB_T \cap \BB_0$, we have already shown that this holds for one curve, and therefore it holds for all curves.

Finally, any element of $\UU$ is uniquely determined by an almost filling set in $\CC\cup\OO\cup\SS_0\cup\BB_0$, compare with Section \ref{S:closed}. Therefore  $h|_{\UU} = \phi |_{ \UU}$.  This finishes the proof of Theorem \ref{T:punctured}
\end{proof}

\end{document}